\newtheorem{theorem}{Theorem}[section]
\newtheorem{lemma}[theorem]{Lemma}
\newtheorem{proof of lemma}[theorem]{Proof of Lemma}
\newtheorem{proposition}[theorem]{Proposition}
\newtheorem{corollary}[theorem]{Corollary}
\theoremstyle{definition}
\newtheorem{definition}[theorem]{Definition}
\newtheorem{remark}[theorem]{Remark}
\numberwithin{equation}{section}
\begin{document}

\title[Weyl transform]
{Unbounded Weyl transform on the Euclidean motion group and Heisenberg motion group}

\author{Somnath Ghosh and R.K. Srivastava}

\address{Department of Mathematics, Indian Institute of Technology, Guwahati, India 781039.}
\email{somnath.g.math@gmail.com, rksri@iitg.ac.in}

\subjclass[2020]{Primary 43A15, 43A30; Secondary 33C05, 33C10}

\date{\today}

\keywords{Euclidean motion group, Fourier transform, Heisenberg motion group, Weyl transform.}

\begin{abstract}
In this article, we define Weyl transform on second countable type - $I$ locally compact group $G,$
and as an operator on $L^2(G),$ we prove that the Weyl transform is compact when the symbol
lies in $L^p(G\times \hat{G})$ with $1\leq p\leq 2.$ Further, for the Euclidean motion group and
Heisenberg motion group, we prove that the Weyl transform can not be extended as a bounded
operator for the symbol belongs to $L^p(G\times \hat{G})$ with $2<p<\infty.$ To carry out this,
we construct positive, square integrable and compactly supported function, on the respective groups,
such that $L^{p'}$ norm of its Fourier transform is infinite, where $p'$ is the conjugate index of $p.$
\end{abstract}

\maketitle

\section{Introduction}
In \cite{We}, Hermann Weyl studied the quantization problem in quantum mechanics and introduced
a type of pseudo-differential operators. These operators are useful in physics and mathematics,
especially in PDE, harmonic analysis, time frequency analysis. In \cite{Wo}, Wong called these
operators as Weyl transform. Further in \cite{Wo}, for the symbol in $L^p(\mathbb{R}^{2n})$ with
$1\leq p\leq 2,$ compactness of the Weyl transform, as an operator on $L^2(\mathbb{R}^n),$ is
studied. Moreover in \cite{Si}, Simon proved that the operator is not even bounded when the symbol
is in $L^p(\mathbb{R}^{2n})$ with $2<p<\infty.$

\smallskip

In general, for a locally compact group, Fourier transform is an operator valued function. For the
Heisenberg group, in \cite{PZ}, Weyl transform is defined for operator valued symbol and proved its
boundedness (even compactness) when the symbol is in the corresponding $L^p$ spaces with $1\leq p\leq 2,$
while unboundedness for $2<p<\infty.$ Further, in \cite{PZ2} Weyl transform on the upper half plane,
and in \cite{CZ} Weyl transform on the quaternion Heisenberg group, are studied.

\smallskip

In this article, we consider second countable type $I$ locally compact group $G$ and define Weyl
transform in view of its inversion formula. We prove that the Weyl transform, as an operator on
$L^2(G),$ is compact, when the symbol is in $L^p(G\times \hat{G})$ with $1\leq p\leq 2,$ where
$\hat{G}$ is the dual of $G.$ Further, to prove that the Weyl transform can not be extended as
a bounded operator for the symbol is in $L^p(G\times \hat{G})$ with $2<p<\infty,$ it is enough to
construct a positive, square integrable and compactly supported function on $G$ such that $L^{p'}$
norm of its Fourier transform is infinite, where $\frac{1}{p}+\frac{1}{p'}=1.$ In this perspective,
we construct such type of functions for the Euclidean motion group and Heisenberg motion group.

\smallskip

In addition, we want to mention that these examples will not follow directly from the Euclidean space
and Heisenberg group, respectively, for the following reasons. The Fourier transform on the Euclidean
motion group is operator valued, whereas it is just a function for the Euclidean space. Secondly, the 
case becomes more difficult for the Heisenberg motion group due to the presence of metaplectic 
representation whose implicitness may not be visible at the first instance.

\section{Weyl transform on locally compact groups}
In this section, we recall some harmonic analysis results, namely Fourier inversion, Plancherel formula,
and Hausdorff-Young inequality on certain locally compact groups. Then in terms of Wigner transform, we
define Weyl transform. After that, we prove the compactness of the Weyl transform for the symbol in
corresponding $L^p$ spaces with $1\leq p\leq 2.$ This section concludes with a sufficient condition for
the unboundedness of the Weyl transform for $2<p<\infty.$

\smallskip

Let $G$ be a second countable locally compact group with type $I$ left regular representation, and
$A(G),$ $\hat{G}$ denote the Fourier algebra and dual of $G,$ respectively. Then there is a standard
measure $\mu$ on $\hat{G},$ called the Plancherel measure, a $\mu$-measurable field $(\pi,\mathcal{H}_\pi)$
of representation and a measurable field $\mathcal{K}=(K_\pi)$ of nonzero positive self-adjoint operators
such that $K_\pi$ is semi-invariant with weight $\triangle^{-1},$ for almost all $\pi \in \hat{G},$ where
$\triangle$ be the modular function of $G$ and $A(G).$ If $G$ is unimodular, then $K_\pi=I_{\mathcal{H}_{\pi}},$
the identity operator on $\mathcal{H}_{\pi}.$ For $f\in L^1(G) \cap A(G),$ the group Fourier transform
\[\pi(f)=\int_G f(x)\pi(x) d\nu(x)\] is a bounded operator on $\mathcal{H}_\pi,$ where $d\nu$ is a left Haar
measure on $G.$ Further, $f$ can be recovered by the inversion formula.
\begin{theorem}\cite{N}(Inversion theorem)\label{th151}
Let $f\in L^1(G) \cap A(G).$ Then
\[f(x)=\int_{\hat{G}} \text{tr} (\pi(x)^{-1} \pi(f)K_{\pi})d\mu(\pi).\]
\end{theorem}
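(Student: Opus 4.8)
The plan is to derive this inversion formula from the abstract Plancherel theorem for second countable type $I$ groups, which provides a unitary Fourier transform $\mathcal{F}\colon L^2(G)\to\int_{\hat G}^{\oplus}\mathrm{HS}(\mathcal{H}_\pi)\,d\mu(\pi)$ onto the field of Hilbert--Schmidt operators, intertwining the left regular representation $\lambda$ with the field $(\pi(x))$ acting by left multiplication, and normalised so that $\mathcal{F}(\xi)(\pi)=\pi(\xi)K_\pi^{1/2}$ for $\xi\in C_c(G)$. The point is that the inversion formula is, up to polarisation, the ``square root'' of the Plancherel identity, so it should suffice to verify it on the dense subclass $\{\xi*\tilde\eta:\xi,\eta\in C_c(G)\}$ of $L^1(G)\cap A(G)$, where $\tilde\eta(x)=\overline{\eta(x^{-1})}$, and then extend by continuity in the Fourier--algebra norm. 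This reduction is legitimate because, for each fixed $x$, both sides of the asserted identity are functionals of $f$ bounded by $\|f\|_{A(G)}$: on the left since $A(G)\hookrightarrow C_0(G)$ contractively, and on the right by the estimate obtained below.

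For $f=\xi*\tilde\eta$ with $\xi,\eta\in C_c(G)$ I would first establish the factorisation $\pi(f)K_\pi=\mathcal{F}(\xi)(\pi)\,\mathcal{F}(\eta)(\pi)^{*}$. This is where the non-unimodular corrections enter: one has $\pi(f)=\pi(\xi)\pi(\tilde\eta)=\pi(\xi)\,\pi(\triangle^{-1}\eta)^{*}$ while $\mathcal{F}(\xi)(\pi)\mathcal{F}(\eta)(\pi)^{*}=\pi(\xi)K_\pi\pi(\eta)^{*}$, so the identity reduces to $K_\pi\pi(\eta)^{*}=\pi(\triangle^{-1}\eta)^{*}K_\pi$, equivalently $\pi(\eta)K_\pi=K_\pi\pi(\triangle^{-1}\eta)$, which follows by integrating the semi-invariance relation $K_\pi\pi(x)=\triangle(x)\pi(x)K_\pi$ against $\triangle(x)^{-1}\eta(x)\,d\nu(x)$. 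Granting this, $\pi(f)K_\pi$ is a product of two Hilbert--Schmidt operators, hence trace class for almost every $\pi$, with $\int_{\hat G}\|\pi(f)K_\pi\|_1\,d\mu(\pi)\le\|\xi\|_2\|\eta\|_2$ by two applications of Cauchy--Schwarz; this is the bound promised above. Using $\mathrm{tr}(AB^{*})=\langle A,B\rangle_{\mathrm{HS}}$, the unitarity of $\pi(x)$, and $\pi(x)\pi(\eta)=\pi(\lambda(x)\eta)$, one then gets
\[
\mathrm{tr}\bigl(\pi(x)^{-1}\pi(f)K_\pi\bigr)=\bigl\langle\mathcal{F}(\xi)(\pi),\,\mathcal{F}(\lambda(x)\eta)(\pi)\bigr\rangle_{\mathrm{HS}}.
\]
Integrating over $\hat G$ and invoking the Parseval identity for the unitary $\mathcal{F}$ yields $\langle\xi,\lambda(x)\eta\rangle_{L^2(G)}=\int_G\xi(y)\overline{\eta(x^{-1}y)}\,d\nu(y)=(\xi*\tilde\eta)(x)=f(x)$, which is the claim for the test class.

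It remains to run the density/continuity step. For each fixed $x$, the maps $f\mapsto f(x)$ and $f\mapsto\int_{\hat G}\mathrm{tr}(\pi(x)^{-1}\pi(f)K_\pi)\,d\mu(\pi)$ are both $\|\cdot\|_{A(G)}$-continuous --- the second because any $f\in A(G)$ factors as $f=\xi*\tilde\eta$ with $\xi,\eta\in L^2(G)$, to which the estimate of the previous paragraph applies, the infimum over such factorisations being $\|f\|_{A(G)}$ --- and they agree on the dense subclass $\{\xi*\tilde\eta:\xi,\eta\in C_c(G)\}$, hence on all of $A(G)$, in particular on $L^1(G)\cap A(G)$, where $\pi(f)=\int_G f(x)\pi(x)\,d\nu(x)$ is the ordinary operator integral. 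I expect the main obstacle to be precisely the non-unimodular bookkeeping: justifying that $\pi(f)K_\pi$ --- a priori only densely defined, since $K_\pi$ is unbounded --- extends to a trace-class operator, and that the semi-invariance manipulation is valid on the relevant domains; this is the essential content of the Duflo--Moore type analysis underlying the abstract Plancherel theorem. Everything else is the routine polarisation of the Plancherel formula, while the existence of the Plancherel measure $\mu$, the measurable field $(\pi,\mathcal{H}_\pi)$, and the operators $K_\pi$ with the stated semi-invariance --- the deep structural input from type $I$ theory --- I would take from the literature.
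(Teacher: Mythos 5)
The paper offers no proof of this statement: Theorem \ref{th151} is imported verbatim from the cited reference \cite{N} (ultimately resting on the Duflo--Moore Plancherel theory), so there is no in-paper argument to compare yours against. Judged on its own terms, your derivation is the standard one --- polarisation of the Plancherel identity --- and the computational core is sound. The factorisation $\pi(\xi\ast\tilde\eta)K_\pi=\pi(\xi)K_\pi\pi(\eta)^{\ast}$ via the semi-invariance relation $K_\pi\pi(x)=\triangle(x)\pi(x)K_\pi$ is the correct way the weight $\triangle^{-1}$ enters (your identity $K_\pi\pi(\triangle^{-1}\eta)=\pi(\eta)K_\pi$ checks out), the trace computation $\mathrm{tr}(\pi(x)^{-1}\pi(f)K_\pi)=\langle\mathcal F(\xi)(\pi),\mathcal F(\lambda(x)\eta)(\pi)\rangle_{\mathrm{HS}}$ is right, and Parseval then returns $\langle\xi,\lambda(x)\eta\rangle=(\xi\ast\tilde\eta)(x)$.

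Two points deserve flagging. First, in the density step the right-hand functional is only \emph{defined} on all of $A(G)$ through the Eymard factorisation $f=\xi\ast\tilde\eta$ with $\xi,\eta\in L^2(G)$; you must check both that this is independent of the chosen factorisation and that, for $f\in L^1(G)\cap A(G)$, the operator $\mathcal F(\xi)(\pi)\mathcal F(\eta)(\pi)^{\ast}$ really is the (closure of the) composition of the operator integral $\pi(f)$ with $K_\pi$. Agreement of two continuous functionals on a dense set does not by itself deliver this compatibility; it needs a joint approximation of $f$ in $A(G)$ and in a topology controlling $\pi(f)$. Second, the adjoint manipulations $(\pi(\eta)K_\pi^{1/2})^{\ast}=K_\pi^{1/2}\pi(\eta)^{\ast}$ and the semi-invariance integration involve unbounded operators and require domain justification. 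You explicitly acknowledge both issues and defer them to the Duflo--Moore analysis, which is a fair division of labour given that the paper itself treats the entire theorem as a black box; but in a self-contained write-up these are the steps that would need to be filled in, since they are precisely where the non-unimodular theory is nontrivial.
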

Next, we discuss the Plancherel formula for the Fourier transform, and for this, we start with Schatten
class operators. Let $S_p$ be the space of all $p$-Schatten class operators, which is a Banach space
with the norm $\|T\|_{S_p}^p=\text{tr}(T^*T)^{p/2}$ for $1\leq p <\infty$ and for $p=\infty,$ the norm
is the usual operator norm.
\begin{theorem}\cite{DM}(Plancherel formula)\label{th152}
Let $f\in L^1\cap L^2(G).$ Then
\[ \int_G |f(x)|^2 d\nu(x)=\int_{\hat{G}}\|\pi(f)K_\pi^{1/2}\|_{S_2}^2 ~d\mu(\pi).\]
\end{theorem}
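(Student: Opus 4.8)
The plan is to deduce the Plancherel formula directly from the inversion theorem (Theorem~\ref{th151}) by evaluating it at the identity on a suitable product, rather than redoing the direct-integral decomposition of the group von Neumann algebra. I would first work on the dense subspace $\mathcal{D}=C_c(G)\subset L^1(G)\cap L^2(G)$. Given $h\in\mathcal{D}$, set $h^\ast(x)=\overline{h(x^{-1})}\triangle(x^{-1})$ (the involution of the Banach $\ast$-algebra $L^1(G)$) and $g=h^\ast\ast h$. Then $g\in L^1(G)$, and also $g\in A(G)$, since $g$ is a coefficient of the regular representation of $G$; hence Theorem~\ref{th151} applies to $g$. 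Two elementary computations give
\[
g(e)=\int_G |h(x)|^2\,d\nu(x)=\|h\|_2^2,\qquad \pi(g)=\pi(h^\ast)\pi(h)=\pi(h)^\ast\pi(h),
\]
the second because $\pi(h^\ast)=\pi(h)^\ast$ for every unitary representation. Evaluating the inversion formula at $x=e$ and using $\pi(e)=I$ therefore yields $\|h\|_2^2=g(e)=\int_{\hat G}\text{tr}\big(\pi(g)K_\pi\big)\,d\mu(\pi)$. For the positive operators $\pi(g)=\pi(h)^\ast\pi(h)$ and $K_\pi$ the standard reading of the trace gives $\text{tr}(\pi(g)K_\pi)=\text{tr}\big(K_\pi^{1/2}\pi(h)^\ast\pi(h)K_\pi^{1/2}\big)=\|\pi(h)K_\pi^{1/2}\|_{S_2}^2$, a quantity in $[0,\infty]$ which the displayed identity shows to be $\mu$-integrable over $\hat G$. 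Hence
\[
\int_G |h(x)|^2\,d\nu(x)=\int_{\hat G}\|\pi(h)K_\pi^{1/2}\|_{S_2}^2\,d\mu(\pi)\qquad\text{for all }h\in\mathcal{D}.
\]

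Next I would remove the restriction $h\in\mathcal{D}$. The identity just established says precisely that $\Phi\colon h\mapsto\big(\pi\mapsto\pi(h)K_\pi^{1/2}\big)$ is an isometry from $(\mathcal{D},\|\cdot\|_2)$ into the Hilbert space $L^2(\hat G;S_2,\mu)$ of square-integrable $\mu$-measurable fields of Hilbert--Schmidt operators; since $\mathcal{D}$ is dense in $L^2(G)$, it extends uniquely to an isometry $\overline{\Phi}\colon L^2(G)\to L^2(\hat G;S_2,\mu)$. Now let $f\in L^1(G)\cap L^2(G)$ and pick $h_n\in\mathcal{D}$ with $h_n\to f$ in both $L^1(G)$ and $L^2(G)$. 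From $\|\pi(h_n)-\pi(f)\|\le\|h_n-f\|_1$ we get $\pi(h_n)\to\pi(f)$ in operator norm for every $\pi$, while $\Phi(h_n)\to\overline{\Phi}(f)$ in $L^2(\hat G;S_2,\mu)$. Passing to a subsequence along which the latter convergence holds $\mu$-a.e.\ in $S_2$ and comparing the two limits on the dense domain $\mathrm{dom}(K_\pi^{1/2})$ shows that $\overline{\Phi}(f)(\pi)$ is (the bounded closure of) $\pi(f)K_\pi^{1/2}$ for $\mu$-a.e.\ $\pi$. In particular $\pi\mapsto\pi(f)K_\pi^{1/2}$ is a well-defined $\mu$-measurable $S_2$-valued field, and
\[
\int_G |f(x)|^2\,d\nu(x)=\|\overline{\Phi}(f)\|_{L^2(\hat G;S_2,\mu)}^2=\int_{\hat G}\|\pi(f)K_\pi^{1/2}\|_{S_2}^2\,d\mu(\pi),
\]
which is the assertion.

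I expect the real content to sit in two places, both suppressed above. The first is the operator- and measure-theoretic bookkeeping in the second step: measurability of the field $\mathcal{K}=(K_\pi)$, the possible unboundedness of $K_\pi^{1/2}$, and the legitimacy of the $\mu$-a.e.\ identification of limits, so that $\pi(f)K_\pi^{1/2}$ is even meaningful. The second, and more essential, is the verification that $g=h^\ast\ast h$ genuinely satisfies the hypotheses of Theorem~\ref{th151} and that the trace occurring there is literally $\|\pi(h)K_\pi^{1/2}\|_{S_2}^2$; this is exactly where the normalization of the Plancherel measure $\mu$ and the semi-invariance of $K_\pi$ with weight $\triangle^{-1}$ do their work, and in the unimodular case, where $K_\pi=I_{\mathcal{H}_\pi}$, it collapses to the familiar $\int_G|f|^2\,d\nu=\int_{\hat G}\|\pi(f)\|_{S_2}^2\,d\mu$. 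The reduction to $C_c(G)$ and the density/extension argument are otherwise routine.
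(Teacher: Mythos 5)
The paper gives no proof of this statement at all: Theorem~\ref{th152} is imported verbatim from Duflo--Moore \cite{DM}, so there is no internal argument to measure yours against. Your route --- apply the inversion formula (Theorem~\ref{th151}) to $g=h^\ast\ast h$ at $x=e$ for $h\in C_c(G)$, identify $\text{tr}\big(\pi(h)^\ast\pi(h)K_\pi\big)$ with $\|\pi(h)K_\pi^{1/2}\|_{S_2}^2$, and extend by density --- is the classical deduction of Plancherel from inversion, and the individual steps check out: $g(e)=\|h\|_2^2$ via the substitution $y\mapsto y^{-1}$ with the $\triangle(y^{-1})$ factor, $\pi(h^\ast)=\pi(h)^\ast$, $h^\ast\ast h=h^\ast\ast\tilde v$ with $v(z)=\overline{h(z^{-1})}\in C_c(G)$ so that $g\in A(G)$, the cyclic rearrangement of the trace of a product of positive operators in the $[0,\infty]$ sense, and the a.e.\ identification of $\overline{\Phi}(f)(\pi)$ with the bounded closure of $\pi(f)K_\pi^{1/2}$ by comparing limits on $\mathrm{dom}(K_\pi^{1/2})$. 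Two caveats are worth stating explicitly. First, this makes Theorem~\ref{th152} a corollary of Theorem~\ref{th151}, which is legitimate within the paper's framework (both are quoted as black boxes), but in \cite{DM} and \cite{N} the inversion formula is itself extracted from the Plancherel decomposition of the regular representation, so your argument is a deduction from granted axioms rather than an independent proof of the Duflo--Moore theorem. Second, you use the inversion identity at the single point $x=e$, which requires it to hold everywhere and not merely a.e.; this is fine because $g=h^\ast\ast h$ is continuous and the normalization of $\mu$ and $\mathcal K$ is exactly what makes the right-hand side agree with it pointwise, but it deserves a sentence. Neither caveat is a genuine gap.
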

This map $L^1\cap L^2(G)\rightarrow L^2(\hat{G})$ can be extended uniquely to a unitary map
from $L^2(G)$ onto $L^2(\hat{G}).$

\smallskip

Let $f\in L^1\cap L^p(G),$ where $1\leq p\leq \infty.$ Define the $L^p$ Fourier transform of $f$ by
$\mathcal{F}_p(f)\pi=\pi(f)K_\pi^{1/p'},$ where $\frac{1}{p}+\frac{1}{p'}=1.$ Then the following
Hausdorff-Young inequality holds.
\begin{theorem}\cite{N}\label{th153}
If $f\in L^1\cap L^p(G),$ where $1<p<2,$ then $\mathcal F_p(f) \in L^{p'}(\hat{G})$ and this map
$f\mapsto \mathcal F_p(f)$ extends uniquely to a bounded linear map from $L^p(G)$ into
$L^{p'}(\hat{G})$ with norm less than or equal to $1.$
\end{theorem}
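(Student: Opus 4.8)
The plan is to deduce the inequality by complex interpolation between the two endpoints $p=1$ and $p=2$: at $p=1$ one has only the elementary operator-norm bound, and at $p=2$ one has the Plancherel identity of Theorem~\ref{th152}, both with constant $1$; Riesz--Thorin--Stein interpolation then produces the bound $1$ at every intermediate $p$.

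First I would record the endpoints. For $p=1$ we have $p'=\infty$, hence $\mathcal F_1(f)\pi=\pi(f)K_\pi^{0}=\pi(f)$, and since each $(\pi,\mathcal H_\pi)$ is unitary, $\|\pi(f)\|_{\mathrm{op}}\le\int_G|f(x)|\,d\nu(x)$; taking the essential supremum over $\pi\in\hat G$ gives $\|\mathcal F_1(f)\|_{L^\infty(\hat G)}\le\|f\|_{L^1(G)}$. For $p=2$ we have $p'=2$ and $\mathcal F_2(f)\pi=\pi(f)K_\pi^{1/2}$, and Theorem~\ref{th152} is precisely the identity $\|\mathcal F_2(f)\|_{L^2(\hat G)}=\|f\|_{L^2(G)}$.

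The point requiring care is that $f\mapsto\mathcal F_p(f)$ is not a single operator restricted to an interpolation scale: the weight $K_\pi^{1/p'}$ varies with $p$. To handle this I would use Stein's interpolation theorem for analytic families. On the strip $\Sigma=\{z\in\mathbb C:0\le\mathrm{Re}\,z\le\tfrac12\}$ set, via the spectral calculus of the positive self-adjoint $K_\pi$,
\[
T_z(f)(\pi)=\pi(f)\,K_\pi^{\,z},\qquad z\in\Sigma ,
\]
so $T_0=\mathcal F_1$, $T_{1/2}=\mathcal F_2$, and, writing $\theta=2/p'\in(0,1)$ for $1<p<2$, one has $\theta/2=1/p'$ and $T_{1/p'}=\mathcal F_p$. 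On the edge $\mathrm{Re}\,z=0$ the operator $K_\pi^{it}$ is unitary, so $\|T_{it}(f)\|_{L^\infty(\hat G)}\le\|f\|_{L^1(G)}$ uniformly in $t$; on the edge $\mathrm{Re}\,z=\tfrac12$, $K_\pi^{\frac12+it}=K_\pi^{1/2}K_\pi^{it}$ and right multiplication by the unitary $K_\pi^{it}$ preserves the $S_2$-norm, so $\|T_{\frac12+it}(f)\|_{L^2(\hat G)}=\|f\|_{L^2(G)}$ uniformly in $t$. Granting that $z\mapsto T_z(f)$ is analytic on the interior of $\Sigma$ and of admissible growth, in the weak sense appropriate to Stein's theorem, when $f$ runs over a dense class (finite combinations of indicators of finite-measure sets, tested against vectors in spectral subspaces where $K_\pi$ is bounded), the theorem gives that $\mathcal F_p=T_{1/p'}$ maps $L^p(G)$ into $[L^\infty(\hat G),L^2(\hat G)]_{[\theta]}$ with norm at most $1^{1-\theta}\cdot1^{\theta}=1$. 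It then remains to identify this interpolation space with $L^{p'}(\hat G)$, which follows by combining the interpolation of Schatten classes, $[S_\infty,S_2]_{[\theta]}=S_{p'}$ with $\tfrac1{p'}=\tfrac\theta2$, with the complex interpolation of vector-valued $L^p$ spaces over $(\hat G,\mu)$.

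The genuine obstacle is the rigorous set-up of this analytic family when $G$ is not unimodular: since $K_\pi$ may be unbounded, $K_\pi^{z}$ is for $\mathrm{Re}\,z>0$ an unbounded operator and $\pi(f)K_\pi^{z}$ must be read as a densely defined operator that closes up into the relevant Schatten class, so analyticity and the boundary estimates have to be obtained through a spectral cut-off of $K_\pi$ and a limiting argument, with measurability of the field $(T_z(f)(\pi))_\pi$ checked along the way. In the unimodular case this difficulty vanishes: there $K_\pi=I_{\mathcal H_\pi}$, the weight disappears, $\mathcal F_p(f)(\pi)=\pi(f)$ for every $p$, and the assertion is just Riesz--Thorin applied to the single map $f\mapsto(\pi\mapsto\pi(f))$ from $L^1(G)\to L^\infty(\hat G)$ and from $L^2(G)\to L^2(\hat G)$; this already covers the Euclidean motion group and the Heisenberg motion group, which are unimodular. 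Finally, the unique bounded extension to all of $L^p(G)$ is the usual density argument, $L^1(G)\cap L^p(G)$ being dense in $L^p(G)$.
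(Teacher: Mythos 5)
The paper does not prove Theorem \ref{th153}; it is quoted from \cite{N} (resting on Kunze's theorem in the unimodular case and on the Terp--Duflo--Moore formalism in general), so there is no in-paper argument to compare against. Your sketch follows the standard route to this result: Stein interpolation of the analytic family $T_z(f)(\pi)=\pi(f)K_\pi^{z}$ on the strip $0\le\operatorname{Re}z\le\tfrac12$ between the trivial bound $\|\pi(f)\|_{S_\infty}\le\|f\|_1$ and the Plancherel identity of Theorem \ref{th152}; the endpoint computations, the unitarity of $K_\pi^{it}$, the choice $\theta=2/p'$, and the resulting constant $1$ are all correct. The issues you flag as requiring care are exactly where the published proofs spend their effort: giving rigorous meaning to $\pi(f)K_\pi^{z}$ when $K_\pi$ is unbounded (spectral cut-offs and closures), checking measurability and admissible growth of the analytic family, and identifying $[L^\infty(\hat G,S_\infty),L^2(\hat G,S_2)]_{[\theta]}$ with $L^{p'}(\hat G,S_{p'})$ --- the last point being slightly delicate because of the $L^\infty$ endpoint in the vector-valued interpolation, which is usually handled by interpolating from a dense subspace of simple functions or by invoking the Izumi--Terp description of the noncommutative $L^p$ scale as an interpolation scale. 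None of what you assert is wrong, and your closing observation that for unimodular $G$ the weight disappears and plain Riesz--Thorin between $L^1\to L^\infty$ and $L^2\to L^2$ already suffices is correct and covers the Euclidean motion group and the Heisenberg motion group, the only cases the paper actually uses.
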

To study the Weyl transform, we need to define Wigner transform on $G$ and investigate its boundedness
properties. For this, we first describe the following product spaces and the left translation operator.

\smallskip

Consider the measure $d\nu \otimes d\mu$ on $G \times \hat{G}$ and for $1\leq p\leq \infty,$ let
$L^p(G \times \hat{G},S_p,d\nu \otimes d\mu)$ be the space of $S_p$ valued functions satisfying
\begin{align*}
\|f\|_{p,\nu \otimes \mu}^p=&\int_{G \times \hat{G}}\|f(x,\pi)K_\pi^{1/p}\|_{S_p}^p d\nu(x)d\mu(\pi)<\infty,
\qquad 1\leq p<\infty, \\
\|f\|_{\infty,\nu \otimes \mu}=&\underset{(x,\pi)\in G\times\hat{G}}{\text{ess sup}}\|f(x,\pi)\|_{S_\infty}<\infty.
\end{align*}
The left translation operator is defined by $\tau_{x'}f(x)=f(x'^{-1}x)$ for $f\in L^p(G).$ Further,
$C_c(G)$ denotes the space of all compactly supported continuous functions on $G.$ Throughout this
section, $p,p'$ are the conjugate indices.

\begin{definition}
Let $f,g\in C_c(G)$ and $(x,\pi)\in G\times\hat{G}.$ Then Wigner transform associated with $f,g$ is defined by
\[V(f,g)(x,\pi)=\int_G f(x')\tau_{x'}g(x)\pi(x')d\nu(x').\]
That is, $V(f,g)(x,\pi)=\pi{\left(f\cdot \tau g(x)\right)}.$
\end{definition}
The following result proves that the Wigner transforms are in certain $L^p$ spaces.
\begin{proposition}\label{prop151}
Let $f,g \in C_c(G).$ Then $V(f,g)\in L^{p'}(G \times \hat{G},S_{p'},d\nu \otimes d\mu)$ and
\begin{align}\label{exp151}
\|V(f,g)\|_{p',\nu \otimes \mu} \leq \|f\|_2 \|g\|_2
\end{align}
for $p'\in [2,\infty].$ Thus $V:C_c(G)\times C_c(G) \rightarrow L^{p'}(G \times \hat{G},S_{p'},d\nu \otimes d\mu)$
can be extended uniquely to a bilinear operator
$V:L^2(G) \times L^2(G) \rightarrow L^{p'}(G \times \hat{G},S_{p'},d\nu \otimes d\mu)$ with
\begin{align*}
\|V(f,g)\|_{p',\nu \otimes \mu} \leq \|f\|_2\|g\|_2.
\end{align*}
\end{proposition}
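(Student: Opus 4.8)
The plan is to fix $x\in G$, recognise the "slice" $V(f,g)(x,\cdot)$ as an $L^p$ Fourier transform, apply Hausdorff--Young, and then control the remaining integral over $G$ by Young's convolution inequality. Throughout, $p$ denotes the conjugate index of $p'$, so $1\le p\le 2$.

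For $f,g\in C_c(G)$ set $h_x:=f\cdot\tau g(x)$, i.e.\ $h_x(x')=f(x')\,g(x'^{-1}x)$; this is a compactly supported continuous function on $G$, and $(x,x')\mapsto h_x(x')$ is continuous, so all the measurability needed below is automatic. By the definition of the $L^p$ Fourier transform, $V(f,g)(x,\pi)K_\pi^{1/p'}=\pi(h_x)K_\pi^{1/p'}=\mathcal F_p(h_x)(\pi)$. Since $(x,\pi)\mapsto\|\mathcal F_p(h_x)(\pi)\|_{S_{p'}}^{p'}$ is nonnegative and $(\nu\otimes\mu)$-measurable, Tonelli's theorem gives, for $2\le p'<\infty$,
\begin{align*}
\|V(f,g)\|_{p',\nu\otimes\mu}^{p'}&=\int_G\Big(\int_{\hat G}\|\mathcal F_p(h_x)(\pi)\|_{S_{p'}}^{p'}\,d\mu(\pi)\Big)\,d\nu(x)\\
&=\int_G\|\mathcal F_p(h_x)\|_{L^{p'}(\hat G)}^{p'}\,d\nu(x).
\end{align*}
Theorem \ref{th153} (for $1<p<2$), together with the Plancherel identity of Theorem \ref{th152} (for $p=2$), yields $\|\mathcal F_p(h_x)\|_{L^{p'}(\hat G)}\le\|h_x\|_{L^p(G)}$ for every $x$, whence
\begin{align*}
\|V(f,g)\|_{p',\nu\otimes\mu}^{p'}&\le\int_G\|h_x\|_{L^p(G)}^{p'}\,d\nu(x)=\int_G\Big(\int_G|f(x')|^p|g(x'^{-1}x)|^p\,d\nu(x')\Big)^{p'/p}d\nu(x)\\
&=\big\||f|^p*|g|^p\big\|_{L^{p'/p}(G)}^{p'/p}.
\end{align*}

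It remains to estimate the convolution. Since $p\le 2\le p'$, the functions $|f|^p$ and $|g|^p$ lie in $L^{2/p}(G)$ with $\||f|^p\|_{2/p}=\|f\|_2^p$ and $\||g|^p\|_{2/p}=\|g\|_2^p$, and the exponents obey $\tfrac p2+\tfrac p2=p=1+\tfrac p{p'}$. Young's convolution inequality therefore gives $\||f|^p*|g|^p\|_{p'/p}\le\|f\|_2^p\|g\|_2^p$, and inserting this above and taking $p'$-th roots proves \eqref{exp151} for $2\le p'<\infty$. For $p'=\infty$ (i.e.\ $p=1$) one argues directly: $\|V(f,g)(x,\pi)\|_{S_\infty}=\|\pi(h_x)\|\le\|h_x\|_1=(|f|*|g|)(x)$, and Young's inequality with exponents $2,2,\infty$ bounds the right-hand side by $\|f\|_2\|g\|_2$ uniformly in $(x,\pi)$. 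For the extension step, $C_c(G)$ is dense in $L^2(G)$, and \eqref{exp151} together with bilinearity of $V$ shows that $V$ is jointly continuous on $C_c(G)\times C_c(G)$ in the $L^2\times L^2$ topology, via $\|V(f_1,g_1)-V(f_2,g_2)\|_{p',\nu\otimes\mu}\le\|f_1-f_2\|_2\|g_1\|_2+\|f_2\|_2\|g_1-g_2\|_2$; hence $V$ extends uniquely to a bounded bilinear map on $L^2(G)\times L^2(G)$ obeying the same bound.

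The step requiring the most attention is the Hausdorff--Young input, since Theorem \ref{th153} as stated covers only $1<p<2$: the endpoints $p=1$ and $p=2$ must be handled separately (by the elementary bound $\|\pi(h)\|\le\|h\|_1$ and by Plancherel, respectively), and one should check that the $K_\pi^{1/p'}$ weights — and, in the non-unimodular case, the modular function appearing in the substitution turning $h_x$ into a convolution kernel — are carried consistently through all regimes. Once this is in place, the identification of the $G$-integral as $|f|^p*|g|^p$ and the bookkeeping with conjugate exponents in Young's inequality are routine.
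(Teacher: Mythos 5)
Your proof is correct and reaches the stated bound, but by a genuinely different route from the paper's. The paper proves the two endpoint estimates directly --- $p'=\infty$ via the trivial bound $\|\pi(h)\|_{S_\infty}\leq\|h\|_1$ followed by Cauchy--Schwarz, and $p'=2$ via the Plancherel formula (where one in fact gets equality) --- and then invokes the Riesz--Thorin interpolation theorem for the bilinear map $V$ to cover $2<p'<\infty$. You instead freeze $x$, identify the slice $V(f,g)(x,\cdot)$ as the $L^p$ Fourier transform of $h_x=f\cdot\tau g(x)$, apply Hausdorff--Young (Theorem \ref{th153}) in the $\hat{G}$-variable, and then recognize $\int_G\|h_x\|_p^{p'}\,d\nu(x)$ as $\||f|^p\ast|g|^p\|_{p'/p}^{p'/p}$, which Young's inequality with exponents $(2/p,2/p,p'/p)$ controls by $\|f\|_2^{p'}\|g\|_2^{p'}$; your exponent bookkeeping checks out, and the endpoints $p=1$ and $p=2$ of Hausdorff--Young are correctly supplied by hand. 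What your approach buys is that the interpolation is outsourced to the already-cited Hausdorff--Young theorem, so you never have to justify Riesz--Thorin for an operator-valued bilinear map into the mixed-norm spaces $L^{p'}(G\times\hat{G},S_{p'},d\nu\otimes d\mu)$ --- a point the paper's one-line appeal to Riesz--Thorin passes over; the price is that you need Young's convolution inequality on $G$. The caveat you flag --- that for non-unimodular $G$ the substitution $x'\mapsto x'^{-1}x$ and Young's inequality pick up modular-function factors --- is real, but it afflicts the paper's own $p'=\infty$ endpoint in exactly the same way (the step $\int_G|g(x'^{-1}x)|^2\,d\nu(x')=\|g\|_2^2$ uses unimodularity), so your argument is sound to the same standard as the original.
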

\begin{proof}
Let $p'=\infty.$ Then
\begin{align*}
\|V(f,g)\|_{\infty,\nu \otimes \mu}=& \underset{(x,\pi)\in G \times \hat{G}}{\text{ess sup}}\|V(f,g)(x,\pi)\|_{S_\infty} \\
=& \underset{(x,\pi)\in G \times \hat{G}}{\text{ess sup}}\|\int_G f(x')\tau_{x'}g(x)\pi(x')d\nu(x')\|_{S_\infty} \\
\leq & \, \underset{x\in G}{\text{ess sup}}\int_G |f(x')\tau_{x'}g(x)| d\nu(x') \leq \|f\|_2 \|g\|_2.
\end{align*}
For $p'=2,$ Plancherel formula, Theorem \ref{th152}, gives
\begin{align*}
\|V(f,g)\|_{2,\nu \otimes \mu}^2
=&\int_G \int_{\hat{G}}\|\pi{\left(f\cdot \tau g(x)\right)}K_{\pi}^{1/2}\|_{S_2}^2 d\mu(\pi) d\nu(x) \\
=&\int_G \int_G |f(x')\tau_{x'}g(x)|^2 d\nu(x')d\nu(x)
=\|f\|_2^2\|g\|_2^2.
\end{align*}
Then Riesz-Thorin interpolation theorem completes the proof.
\end{proof}

The following proposition is a way of writing the group Fourier transform in terms of the Wigner transform.
\begin{proposition}\label{prop152}
Let $f,g\in L^1\cap L^2(G)$ and $\mathrm C=\int_G g(x) d\nu(x)\neq 0.$
Then $\pi(f)=\mathrm C^{-1}\int_G V(f,g)(x,\pi)d\nu(x)$ for $\pi\in \hat{G}.$
\end{proposition}
\begin{proof}
\begin{align*}
\int_G V(f,g)(x,\pi)d\nu(x)=&\int_G \int_G f(x')g(x'^{-1}x)\pi(x') d\nu(x)d\nu(x') \\
=&\left(\int_G g(x) d\nu(x)\right) \left(\int_G f(x')\pi(x')d\nu(x')\right) =\mathrm C~\pi(f).
\end{align*}
\end{proof}
In view of Proposition \ref{prop152}, the Inversion formula \ref{th151} can be reformulated.
\begin{corollary}
Let $f\in L^1(G)\cap A(G)$ and $g\in L^1\cap L^2(G)$ with $\mathrm C=\int_G g(x) d\nu(x)\neq 0.$ Then
\[f(x)=\mathrm C^{-1}\int_{\hat{G}}\text{tr}\left(\pi(x)^{-1}\left(\int_G V(f,g)(x',\pi)d\nu(x')\right)
K_{\pi} \right)d\mu(\pi).\]
\end{corollary}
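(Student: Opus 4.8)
The plan is to read off the corollary as a direct combination of the Inversion Theorem (Theorem~\ref{th151}) with the identity established in Proposition~\ref{prop152}. The only preliminary point is to verify that $f$ and $g$ satisfy the hypotheses of Proposition~\ref{prop152}, and after that the argument is pure substitution.

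First I would note that the assumption $f\in L^1(G)\cap A(G)$ already gives $f\in L^1(G)\cap L^2(G)$. Indeed, every element of the Fourier algebra lies in $C_0(G)$, so $f$ is bounded; hence $|f|^2\le\|f\|_\infty|f|$ is integrable, i.e. $f\in L^2(G)$. Together with the hypothesis $g\in L^1\cap L^2(G)$ and $\mathrm C=\int_G g\,d\nu\neq 0$, Proposition~\ref{prop152} applies and yields the operator identity
\[\pi(f)=\mathrm C^{-1}\int_G V(f,g)(x',\pi)\,d\nu(x'),\qquad \pi\in\hat G.\]

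Next I would substitute this expression for $\pi(f)$ into the inversion formula of Theorem~\ref{th151}. Since $f\in L^1(G)\cap A(G)$, Theorem~\ref{th151} guarantees both that $f(x)=\int_{\hat G}\text{tr}(\pi(x)^{-1}\pi(f)K_\pi)\,d\mu(\pi)$ holds pointwise and that the integrand is $\mu$-integrable. Because Proposition~\ref{prop152} provides an \emph{equality of operators}, one may insert it directly; using that the scalar $\mathrm C^{-1}$ factors out of the trace and of the $d\mu$-integral, one arrives at
\[f(x)=\mathrm C^{-1}\int_{\hat G}\text{tr}\!\left(\pi(x)^{-1}\!\left(\int_G V(f,g)(x',\pi)\,d\nu(x')\right)K_\pi\right)d\mu(\pi),\]
which is the asserted formula. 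Note that no interchange of the trace with the inner $d\nu(x')$-integral is required, since that integral has already been identified with the bounded operator $\mathrm C\,\pi(f)$ appearing in the already-convergent inversion formula; consequently there is no genuine obstacle, and the corollary is simply the inversion theorem rewritten through the Wigner transform.
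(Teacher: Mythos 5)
Your proposal is correct and follows exactly the route the paper intends: the corollary is stated as an immediate substitution of Proposition~\ref{prop152} into Theorem~\ref{th151}, and the paper offers no further argument. Your extra observation that $A(G)\subseteq C_0(G)$ forces $f\in L^1\cap L^2(G)$, so that Proposition~\ref{prop152} genuinely applies, is a small but worthwhile addition that the paper leaves implicit.
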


\begin{definition}\label{def151}
Let $\varsigma\in L^p(G \times \hat{G},S_p,d\nu \otimes d\mu),$ where $1\leq p\leq 2.$ Then corresponding
to $\varsigma,$ Weyl transform $W_{\varsigma}:L^2(G)\rightarrow L^2(G)$ is defined by
\begin{align*}
\langle W_{\varsigma}f,\bar{g}\rangle =\langle V(f,g),\varsigma \rangle_{\nu \otimes \mu}
=\int_G \int_{\hat{G}}\text{tr}\left(\varsigma^*(x,\pi)V(f,g)(x,\pi)K_{\pi}\right)d\mu(\pi)d\nu(x),
\end{align*}
where $f,g\in L^2(G).$ Here, by the abuse of notation $\langle \cdot\, , \cdot  \rangle_{\nu \otimes \mu}$ is used.
\end{definition}
After a bit of calculation, we can conclude that
\begin{align}\label{exp152}
W_{\varsigma}f(x)=\int_{\hat{G}}\int_G \text{tr}\left(\varsigma^*(x'x,\pi)
\pi(x')K_{\pi}\right)f(x')d\nu(x')d\mu(\pi).
\end{align}
Thus $W_{\varsigma}:L^2(G)\rightarrow L^2(G)$ is an integral operator with kernel
\[K(x,x')=\int_{\hat{G}}\text{tr}\left(\varsigma^*(x'x,\pi)\pi(x')K_{\pi}\right)d\mu(\pi).\]
In the following proposition, we investigate the boundedness of the Weyl transform.
\begin{proposition}\label{prop153}
Let $\varsigma\in L^p(G \times \hat{G},S_p,d\nu \otimes d\mu),$ where $1\leq p \leq 2.$
Then $\|W_{\varsigma}\|\leq \|\varsigma\|_{p,\nu \otimes \mu}.$
\end{proposition}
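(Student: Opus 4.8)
The plan is to reduce everything to a single H\"older-type inequality for the $S_p$-valued mixed-norm spaces and then recover the operator-norm bound by $L^2$ duality. For $f,g\in L^2(G)$ I would start from Definition \ref{def151}, which gives
\[
\langle W_{\varsigma}f,\bar g\rangle=\langle V(f,g),\varsigma\rangle_{\nu\otimes\mu}
=\int_G\int_{\hat G}\text{tr}\big(\varsigma^*(x,\pi)V(f,g)(x,\pi)K_\pi\big)\,d\mu(\pi)\,d\nu(x),
\]
and aim to bound the right-hand side by $\|\varsigma\|_{p,\nu\otimes\mu}\,\|f\|_2\,\|g\|_2$. Since $g\mapsto\bar g$ is an isometric bijection of $L^2(G)$, one has $\|W_{\varsigma}f\|_2=\sup\{|\langle W_{\varsigma}f,\bar g\rangle|:\|g\|_2\le1\}$, so such a bilinear bound yields $\|W_{\varsigma}\|\le\|\varsigma\|_{p,\nu\otimes\mu}$ at once.

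The heart of the matter is a pointwise (in $(x,\pi)$) estimate on the trace. Writing $p'$ for the conjugate index of $p$, and using that $K_\pi$ is positive self-adjoint together with cyclicity of the trace, I would rewrite
\[
\text{tr}\big(\varsigma^*(x,\pi)V(f,g)(x,\pi)K_\pi\big)
=\text{tr}\Big(\big(\varsigma(x,\pi)K_\pi^{1/p}\big)^{*}\big(V(f,g)(x,\pi)K_\pi^{1/p'}\big)\Big).
\]
This makes sense for $\mu$-almost every $\pi$ because $\varsigma(x,\pi)K_\pi^{1/p}\in S_p$ by hypothesis, while $V(f,g)(x,\pi)K_\pi^{1/p'}\in S_{p'}$ by Proposition \ref{prop151}, so the product is trace class. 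H\"older's inequality for Schatten norms, $|\text{tr}(AB)|\le\|A\|_{S_p}\|B\|_{S_{p'}}$ (the $S_1$--$S_\infty$ pairing when $p=1$), then gives the pointwise bound $|\text{tr}(\varsigma^*V(f,g)K_\pi)|\le\|\varsigma(x,\pi)K_\pi^{1/p}\|_{S_p}\,\|V(f,g)(x,\pi)K_\pi^{1/p'}\|_{S_{p'}}$.

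Integrating this over $G\times\hat G$ against $d\nu\otimes d\mu$ and applying the scalar H\"older inequality with exponents $p,p'$ gives $|\langle V(f,g),\varsigma\rangle_{\nu\otimes\mu}|\le\|\varsigma\|_{p,\nu\otimes\mu}\,\|V(f,g)\|_{p',\nu\otimes\mu}$; then Proposition \ref{prop151} bounds the last factor by $\|f\|_2\|g\|_2$, and the supremum over $\|g\|_2\le1$ finishes the argument. I expect the only genuinely delicate point to be the trace rewriting with the possibly unbounded weight $K_\pi^{1/p}$: one must check that it is valid for $\mu$-almost every $\pi$ and that both factors land in the asserted Schatten classes, which is exactly what membership of $\varsigma$ in $L^p(G\times\hat G,S_p,d\nu\otimes d\mu)$ together with Proposition \ref{prop151} delivers. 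As a consistency check one can handle the endpoints directly --- $p=2$, where the Schatten H\"older inequality is just Cauchy--Schwarz, and $p=1$, where one uses the essential-supremum estimate on $\|V(f,g)(x,\pi)\|_{S_\infty}$ --- and one could equally prove these two cases and interpolate, but the computation above covers the whole range $1\le p\le2$ uniformly.
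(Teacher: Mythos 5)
Your proposal is correct and follows essentially the same route as the paper's proof: factor $K_\pi=K_\pi^{1/p}K_\pi^{1/p'}$ inside the trace, apply the Schatten--H\"older inequality pointwise, then the scalar H\"older inequality over $G\times\hat G$, and finish with the bound $\|V(f,g)\|_{p',\nu\otimes\mu}\le\|f\|_2\|g\|_2$ from Proposition \ref{prop151}. The only difference is cosmetic --- you spell out the cyclicity-of-trace rewriting and the final duality step $\|W_\varsigma f\|_2=\sup_{\|g\|_2\le1}|\langle W_\varsigma f,\bar g\rangle|$, which the paper leaves implicit.
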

\begin{proof}
Since $|\text{tr}(A^*B)|\leq \|A\|_{S_p}\|B\|_{S_{p'}},$ for $f,g \in L^2(G)$ we have
\begin{align*}
|\langle W_{\varsigma}f,\bar{g}\rangle| &\leq \int_G \int_{\hat{G}}|\text{tr}
\left(\varsigma^*(x,\pi)V(f,g)(x,\pi)K_{\pi}\right)|d\mu(\pi)d\nu(x) \\
&\leq \int_G \int_{\hat{G}}\|\varsigma(x,\pi)(K_{\pi}^{1/p})^*\|_{S_p}\|V(f,g)(x,\pi)K_{\pi}^{1/p'}\|_{S_{p'}}d\mu(\pi)d\nu(x).
\end{align*}
By H\"{o}lder's inequality, the above integral is less than or equal to
\begin{align*}
&\left(\int_G \int_{\hat{G}} \|\varsigma(x,\pi)K_{\pi}^{1/p}\|_{S_p}^p d\mu(\pi)d\nu(x)\right)^{\frac{1}{p}}
\left(\int_G \int_{\hat{G}}\|V(f,g)(x,\pi)K_{\pi}^{1/p'}\|_{S_{p'}}^{p'} d\mu(\pi)d\nu(x)\right)^{\frac{1}{p'}} \\
&=\|\varsigma \|_{p,\nu \otimes \mu} \|V(f,g)\|_{p',\nu \otimes \mu}
\leq \|\varsigma \|_{p,\nu \otimes \mu} \|f\|_2 \|g\|_2.
\end{align*}
\end{proof}
Further for $p=2,$ $W_\varsigma$ is a Hilbert-Schmidt operator, and for $p=1,$ $W_\varsigma$ is a trace class operator.
\begin{proposition}\label{prop156}
If $\varsigma\in L^2(G \times \hat{G},S_2,d\nu \otimes d\mu),$ then $W_{\varsigma}$
is a Hilbert-Schmidt operator.
\end{proposition}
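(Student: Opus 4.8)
The plan is to realise the Hilbert--Schmidt norm of $W_\varsigma$ as a Bessel-type sum of the coefficients of $\varsigma$ against an orthonormal system built out of the Wigner transform, and then simply to invoke Bessel's inequality. The mechanism behind this is that for $p'=2$ the Wigner transform is not merely bounded (Proposition \ref{prop151}) but, after polarisation, behaves like an isometry in the appropriate bilinear sense.

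First I would promote the Plancherel computation appearing in the proof of Proposition \ref{prop151} to the polarisation identity
\[\langle V(f_1,g_1),V(f_2,g_2)\rangle_{\nu\otimes\mu}=\langle f_1,f_2\rangle_{L^2(G)}\,\langle g_1,g_2\rangle_{L^2(G)},\qquad f_i,g_i\in C_c(G).\]
To see this, fix $x\in G$ and apply the polarised form of Theorem \ref{th152} in the variable $\pi$ to the pair of functions $x'\mapsto f_i(x')g_i(x'^{-1}x)$ in $C_c(G)$; this identifies $\int_{\hat G}\mathrm{tr}(V(f_2,g_2)(x,\pi)^{*}V(f_1,g_1)(x,\pi)K_\pi)\,d\mu(\pi)$ with $\int_G f_1(x')g_1(x'^{-1}x)\overline{f_2(x')g_2(x'^{-1}x)}\,d\nu(x')$. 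Integrating in $x$, interchanging the two integrations by Fubini (the integrand is continuous and compactly supported), and then using the substitution $y=x'^{-1}x$ together with the left invariance of $\nu$ to factor out $\int_G g_1(y)\overline{g_2(y)}\,d\nu(y)$, one arrives at $\langle f_1,f_2\rangle_{L^2(G)}\langle g_1,g_2\rangle_{L^2(G)}$. By the norm bound in Proposition \ref{prop151} this identity then extends, by continuity and sesquilinearity, to all $f_i,g_i\in L^2(G)$.

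Next, since $G$ is second countable the Hilbert space $L^2(G)$ is separable, so I fix a countable orthonormal basis $\{e_n\}_n$. Complex conjugation being an antiunitary bijection of $L^2(G)$, $\{\overline{e_m}\}_m$ is again an orthonormal basis, so the polarisation identity gives $\langle V(e_n,\overline{e_m}),V(e_{n'},\overline{e_{m'}})\rangle_{\nu\otimes\mu}=\delta_{nn'}\delta_{mm'}$; that is, $\{V(e_n,\overline{e_m})\}_{n,m}$ is an orthonormal system in $L^2(G\times\hat G,S_2,d\nu\otimes d\mu)$. Since $W_\varsigma$ is bounded (Proposition \ref{prop153} with $p=2$), I may compute its Hilbert--Schmidt norm in the basis $\{e_n\}$; applying Definition \ref{def151} with $g=\overline{e_m}$ (so that $\bar g=e_m$) yields
\[\|W_\varsigma\|_{S_2}^2=\sum_{n,m}|\langle W_\varsigma e_n,e_m\rangle|^2=\sum_{n,m}|\langle V(e_n,\overline{e_m}),\varsigma\rangle_{\nu\otimes\mu}|^2\le\|\varsigma\|_{2,\nu\otimes\mu}^2<\infty,\]
the last inequality being Bessel's inequality for the orthonormal system above. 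Hence $W_\varsigma\in S_2$, and in fact $\|W_\varsigma\|_{S_2}\le\|\varsigma\|_{2,\nu\otimes\mu}$, which refines the bound of Proposition \ref{prop153} in the case $p=2$.

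The step I expect to demand the most care is the polarisation identity: one must handle the operator-valued Plancherel formula with the semi-invariant weights $K_\pi$ correctly --- using the cyclicity of the trace so that $\|TK_\pi^{1/2}\|_{S_2}^2=\mathrm{tr}(T^{*}TK_\pi)$ genuinely polarises as a sesquilinear form --- justify the interchange of the $x$- and $x'$-integrations, and keep the conjugations in Definition \ref{def151} straight. Once that identity is in hand, the separability of $L^2(G)$ and Bessel's inequality finish the argument at once.
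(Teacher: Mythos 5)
Your proof is correct, but it takes a genuinely different route from the paper's. The paper works with the explicit integral kernel $K(x,x')=\int_{\hat G}\mathrm{tr}\left(\varsigma^*(x'x,\pi)\pi(x')K_\pi\right)d\mu(\pi)$ from (\ref{exp152}): after a left-invariant change of variables it recognises the inner $\hat G$-integral as an inverse Fourier transform of $\varsigma(x,\cdot)$ and applies the Plancherel formula in $x'$, which yields the exact identity $\|W_\varsigma\|_{S_2}=\|\varsigma\|_{2,\nu\otimes\mu}$. You instead stay entirely with the weak definition $\langle W_\varsigma f,\bar g\rangle=\langle V(f,g),\varsigma\rangle_{\nu\otimes\mu}$, upgrade the $p'=2$ case of Proposition \ref{prop151} to the Moyal-type orthogonality relation $\langle V(f_1,g_1),V(f_2,g_2)\rangle_{\nu\otimes\mu}=\langle f_1,f_2\rangle\langle g_1,g_2\rangle$ (a correct polarisation of the Plancherel computation already present in that proof, since the Plancherel map is unitary), and then sum matrix coefficients over the orthonormal system $\{V(e_n,\overline{e_m})\}$ and invoke Bessel. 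What your route buys is that it avoids the kernel manipulations entirely (no change of variables, no pointwise Fourier inversion of the field $\varsigma(x,\cdot)$) and isolates the structural fact --- the orthogonality relations for the Wigner transform --- that really drives the result; the small price is that Bessel's inequality only gives $\|W_\varsigma\|_{S_2}\le\|\varsigma\|_{2,\nu\otimes\mu}$ rather than the equality the paper obtains, since the system $\{V(e_n,\overline{e_m})\}$ need not be complete in $L^2(G\times\hat G,S_2,d\nu\otimes d\mu)$. The inequality is of course all that the proposition requires.
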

\begin{proof}
Since $W_\varsigma$ is an integral operator, we have
\begin{align*}
\|W_{\varsigma}\|_{S_2}^2 &=\int_G\int_G|K(x,x')|^2 d\nu(x) d\nu(x') \\
&=\int_G\int_G \left|\int_{\hat{G}}\text{tr}\left(\varsigma^*(x'x,\pi)\pi(x')K_{\pi}\right)d\mu(\pi)\right|^2 d\nu(x) d\nu(x') \\
&=\int_G\int_G \left|\int_{\hat{G}}\text{tr}\left(\varsigma^*(x,\pi)\pi(x')K_{\pi}\right)d\mu(\pi)\right|^2 d\nu(x) d\nu(x'),
\end{align*}
where last equality is ensured by applying the change of variable $x=x'x,$ as $d\nu$ is a left invariant
Haar measure. Since $\text{tr}(T^*)=\overline{\text{tr}(T)}$ and $K_\pi^*=K_\pi,$ the above integral becomes
\begin{align*}
\int_G\int_G \left|\int_{\hat{G}}\text{tr}\left(K_{\pi}\pi(x')^{-1}\varsigma(x,\pi)\right)d\mu(\pi)\right|^2 d\nu(x)d\nu(x').
\end{align*}
Applying the Fourier inversion and Plancherel formula, we get
\begin{align*}
\|W_{\varsigma}\|_{S_2}^2&=\int_G\int_G |\mathcal{F}^{-1}(\varsigma)(x,\cdot)(x')|^2 d\nu(x') d\nu(x) \\
&=\int_G\int_{\hat{G}}\|\varsigma(x,\pi)K_{\pi}^{1/2}\|_{S_2}^2 d\mu(\pi)d\nu(x)=\|\varsigma\|_{2,\nu\otimes \mu}^2 .
\end{align*}
\end{proof}

\begin{proposition}
If $\varsigma\in L^1(G \times \hat{G},S_1,d\nu \otimes d\mu),$ then $W_{\varsigma}$
is a trace class operator.
\end{proposition}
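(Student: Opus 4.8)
The plan is to mimic the proof of Proposition~\ref{prop156}, exploiting that a composition of two Hilbert--Schmidt operators is trace class. First I would reduce to a dense subclass: the symbols that lie simultaneously in $L^{1}(G\times\hat G,S_{1},d\nu\otimes d\mu)$ and $L^{2}(G\times\hat G,S_{2},d\nu\otimes d\mu)$ are dense in $L^{1}(G\times\hat G,S_{1},d\nu\otimes d\mu)$; since $\varsigma\mapsto W_{\varsigma}$ is bounded into $\mathcal B(L^{2}(G))$ by Proposition~\ref{prop153} with $p=1$, and $S_{1}$ is complete, it suffices to show that for $\varsigma$ in this subclass one has $W_{\varsigma}\in S_{1}$ with $\|W_{\varsigma}\|_{S_{1}}\lesssim\|\varsigma\|_{1,\nu\otimes\mu}$. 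For such $\varsigma$ we already know from Proposition~\ref{prop156} that $W_{\varsigma}$ is Hilbert--Schmidt, hence compact.

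Next I would factor the symbol through its polar decomposition. Writing $\varsigma(x,\pi)=U(x,\pi)P(x,\pi)$ with $P(x,\pi)=|\varsigma(x,\pi)|\ge 0$ and $U(x,\pi)$ a partial isometry with initial space $\overline{\operatorname{ran}P(x,\pi)}$ (a measurable field of such data over the direct-integral decomposition of $\hat G$), put $\varphi:=U\,P^{1/2}$ and $\psi:=P^{1/2}$, so that $\varsigma(x,\pi)=\varphi(x,\pi)\psi(x,\pi)$ pointwise. Because $U^{*}U$ is the support projection of $P$, one gets $\varphi(x,\pi)^{*}\varphi(x,\pi)=P(x,\pi)=\psi(x,\pi)^{*}\psi(x,\pi)$, whence $\|\varphi(x,\pi)K_{\pi}^{1/2}\|_{S_{2}}^{2}=\text{tr}\bigl(P(x,\pi)K_{\pi}\bigr)=\|\psi(x,\pi)K_{\pi}^{1/2}\|_{S_{2}}^{2}$, and since $|P(x,\pi)K_{\pi}|=|\varsigma(x,\pi)K_{\pi}|$ the inequality $|\text{tr}(T)|\le\|T\|_{S_{1}}$ gives $\text{tr}\bigl(P(x,\pi)K_{\pi}\bigr)\le\|\varsigma(x,\pi)K_{\pi}\|_{S_{1}}$. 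Integrating, $\varphi,\psi\in L^{2}(G\times\hat G,S_{2},d\nu\otimes d\mu)$ with $\|\varphi\|_{2,\nu\otimes\mu}=\|\psi\|_{2,\nu\otimes\mu}$ and $\|\varphi\|_{2,\nu\otimes\mu}\|\psi\|_{2,\nu\otimes\mu}\le\|\varsigma\|_{1,\nu\otimes\mu}$.

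The remaining (and decisive) step would be to exhibit Hilbert--Schmidt operators $A,B$ on $L^{2}(G)$ with $W_{\varsigma}=AB$, $\|A\|_{S_{2}}=\|\varphi\|_{2,\nu\otimes\mu}$ and $\|B\|_{S_{2}}=\|\psi\|_{2,\nu\otimes\mu}$; then $W_{\varsigma}\in S_{1}$ and $\|W_{\varsigma}\|_{S_{1}}\le\|A\|_{S_{2}}\|B\|_{S_{2}}\le\|\varsigma\|_{1,\nu\otimes\mu}$, and the density reduction finishes the proof. Starting from \eqref{exp152} and applying the Fourier inversion formula (Theorem~\ref{th151}), the kernel of $W_{\varsigma}$ can be written as $K(x,x')=\overline{\Sigma_{x'x}(x')}$ where $\Sigma_{y}=\mathcal F^{-1}(\varsigma(y,\cdot))$; since $\varsigma(y,\cdot)=\varphi(y,\cdot)\psi(y,\cdot)$, one has $\Sigma_{y}=\Phi_{y}\ast\Psi_{y}$ with $\Phi_{y}=\mathcal F^{-1}(\varphi(y,\cdot))$, $\Psi_{y}=\mathcal F^{-1}(\psi(y,\cdot))$, so that
\[
K(x,x')=\int_{G}\overline{\Phi_{x'x}(r)}\,\overline{\Psi_{x'x}(r^{-1}x')}\,d\nu(r).
\]
From here the intention is to change variables in the $r$-integral, together with the change of variable $x\mapsto x'x$ that already appears in the proof of Proposition~\ref{prop156}, so as to read off $A$ and $B$, with the Plancherel formula (Theorem~\ref{th152}) giving $\iint_{G\times G}|A(x,w)|^{2}\,d\nu(x)\,d\nu(w)=\iint_{G\times G}|\Phi_{y}(r)|^{2}\,d\nu(r)\,d\nu(y)=\|\varphi\|_{2,\nu\otimes\mu}^{2}$ and similarly for $B$.

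I expect this last step to be the real obstacle. In the displayed formula the ``twisted'' argument $x'x$ is common to both factors $\Phi_{x'x}$ and $\Psi_{x'x}$, whereas the kernel of a composition $AB$ has the form $\int_{G}A(x,w)B(w,x')\,d\nu(w)$ in which the dependence on $x$ and on $x'$ sits in separate factors; disentangling this shared variable by an honest change of variables --- so that $K$ genuinely splits as the composition of two square-integrable kernels with the stated Hilbert--Schmidt norms --- is the delicate point, and is also where the modular function and the semi-invariance of the operators $K_{\pi}$ would have to be tracked carefully in the non-unimodular case (everything simplifies when $G$ is unimodular, as for the two groups treated in the sequel). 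Since for a general $\varsigma\in L^{1}(G\times\hat G,S_{1},d\nu\otimes d\mu)$ the operator $W_{\varsigma}$ need not be Hilbert--Schmidt, the reduction to the dense subclass is not a formality but an integral part of this route, and producing the uniform bound $\|W_{\varsigma}\|_{S_{1}}\lesssim\|\varsigma\|_{1,\nu\otimes\mu}$ there is precisely the crux.
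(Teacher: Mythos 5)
Your proposal stops short of a proof, and the step you yourself single out as decisive --- exhibiting Hilbert--Schmidt operators $A,B$ on $L^2(G)$ with $W_{\varsigma}=AB$ and $\|A\|_{S_2}\|B\|_{S_2}\le\|\varsigma\|_{1,\nu\otimes\mu}$ --- is not merely delicate but is where the route fails. A pointwise factorization $\varsigma(x,\pi)=\varphi(x,\pi)\psi(x,\pi)$ of the symbol does not correspond to a factorization of the operator: the symbol calculus here is not multiplicative under pointwise products (just as, for the classical Weyl transform, $W_{\sigma_1\sigma_2}\neq W_{\sigma_1}W_{\sigma_2}$; composition of operators corresponds to a twisted product of symbols, not the pointwise one). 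Your own kernel computation exhibits the obstruction: in
\[
K(x,x')=\int_{G}\overline{\Phi_{x'x}(r)}\,\overline{\Psi_{x'x}(r^{-1}x')}\,d\nu(r)
\]
the argument $x'x$ is welded to \emph{both} factors, so no change of variables can route the $x$-dependence and the $x'$-dependence through a single intermediate variable $w$ as the composition kernel $\int_G A(x,w)B(w,x')\,d\nu(w)$ requires. The preliminary reductions (density of $L^1\cap L^2$ symbols, polar decomposition, $\|\varphi\|_{2,\nu\otimes\mu}\|\psi\|_{2,\nu\otimes\mu}\le\|\varsigma\|_{1,\nu\otimes\mu}$, and the remark that a uniform $S_1$-bound on a dense subclass suffices by completeness of $S_1$) are all sound, but they carry none of the weight; the crux is left as an acknowledged obstacle, so the argument is incomplete.

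For comparison, the paper's proof is entirely different and much more direct: it takes the identity $\|W_{\varsigma}\|_{S_1}=\int_G|K(x,x)|\,d\nu(x)$ for the integral operator with kernel $K$ as in \eqref{exp152}, writes $K(x,x)=\int_{\hat G}\text{tr}\left(\varsigma^*(xx,\pi)\pi(x)K_{\pi}\right)d\mu(\pi)$, estimates the integrand by $\int_{\hat G}\text{tr}\left(|\varsigma^*(xx,\pi)K_{\pi}|\right)d\mu(\pi)$ via $|\text{tr}(T)|\le\text{tr}(|T|)$ (and $\|\pi(x)\|=1$), and identifies the result with $\|\varsigma\|_{1,\nu\otimes\mu}$. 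That route dispenses with any factorization of $W_\varsigma$; its burden lies instead in justifying the diagonal formula for the trace norm of an integral operator. If you wish to rescue your approach, you would need a splitting of the \emph{operator} (for instance through a twisted product of symbols adapted to this Wigner transform), which is precisely the content your proposal does not supply.
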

\begin{proof}
Since $W_\varsigma$ is an integral operator
\begin{align*}
\|W_{\varsigma}\|_{S_1} &=\int_G |K(x,x)| d\nu(x) \\
&=\int_G \left|\int_{\hat{G}}\text{tr}\left(\varsigma^*(xx,\pi)\pi(x)K_{\pi}\right)d\mu(\pi)\right| d\nu(x) \\
&\leq \int_G \int_{\hat{G}}\left|\text{tr}\left(\pi(x)K_{\pi}\varsigma^*(xx,\pi)\right)\right|d\mu(\pi) d\nu(x) \\
&\leq \int_G \int_{\hat{G}}\text{tr}\left(\left|\varsigma^*(xx,\pi)K_{\pi}\right|\right)d\mu(\pi) d\nu(x)
=\|\varsigma\|_{1,\nu \otimes \mu},
\end{align*}
where the second last inequality true because of $|\text{tr}(T)|\leq \text{tr}(T^*T)^{1/2}=\text{tr}(|T|).$
\end{proof}

\begin{theorem}
If $\varsigma\in L^p(G \times \hat{G},S_p,d\nu \otimes d\mu),$ where $1\leq p \leq 2,$
then $W_{\varsigma}$ is a compact operator.
\end{theorem}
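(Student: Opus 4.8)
The plan is to dispose of the two endpoint exponents first and then reach $1<p<2$ by an approximation argument built on the operator-norm bound of Proposition~\ref{prop153}. For $p=2$ the operator $W_\varsigma$ is Hilbert--Schmidt by Proposition~\ref{prop156}, and for $p=1$ it is trace class by the preceding proposition; in either case $W_\varsigma$ is compact. So fix $1<p<2$ and $\varsigma\in L^p(G\times\hat G,S_p,d\nu\otimes d\mu)$. I would reduce everything to the claim that $\varsigma$ can be approximated, in the $\|\cdot\|_{p,\nu\otimes\mu}$-norm, by symbols $\varsigma_n\in L^2(G\times\hat G,S_2,d\nu\otimes d\mu)$. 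Granting that, each $W_{\varsigma_n}$ is Hilbert--Schmidt, hence compact, while by Proposition~\ref{prop153}
\[
\|W_\varsigma-W_{\varsigma_n}\|=\|W_{\varsigma-\varsigma_n}\|\le\|\varsigma-\varsigma_n\|_{p,\nu\otimes\mu}\longrightarrow 0;
\]
since the compact operators form a norm-closed subspace of the bounded operators on $L^2(G)$, $W_\varsigma$ is compact.

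For the approximation I would truncate in three respects at once. Choose an increasing sequence $B_n\subseteq G\times\hat G$ with $\bigcup_n B_n=G\times\hat G$ and $\nu\otimes\mu(B_n)<\infty$ (available since $G$ is second countable and $\mu$ is a standard, hence $\sigma$-finite, measure), put $A_n=\{(x,\pi)\in B_n:\ \|\varsigma(x,\pi)K_\pi^{1/p}\|_{S_p}\le n\}$, let $Q_n(\pi)$ be the spectral projection of $K_\pi$ for the interval $[1/n,n]$, and set $\varsigma_n(x,\pi)=\varsigma(x,\pi)Q_n(\pi)\chi_{A_n}(x,\pi)$. On the range of $Q_n(\pi)$ the operator $K_\pi$ is bounded with bounded inverse, so $K_\pi^{1/2-1/p}Q_n(\pi)$ is a bounded operator of norm at most $n^{1/p-1/2}$; combining this with $S_p\subseteq S_2$ (valid since $p\le 2$) and the fact that right multiplication by a projection does not increase Schatten norms gives
\[
\|\varsigma_n(x,\pi)K_\pi^{1/2}\|_{S_2}\le n^{1/p-1/2}\,\|\varsigma(x,\pi)K_\pi^{1/p}\|_{S_p},
\]
whose square is integrable over $A_n$ thanks to the two cut-offs; hence $\varsigma_n\in L^2(G\times\hat G,S_2,d\nu\otimes d\mu)$. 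For the convergence, $\chi_{A_n}\to 1$ pointwise almost everywhere, and $Q_n(\pi)\to I$ strongly because $K_\pi$ is positive and injective (being nonzero and semi-invariant under the irreducible $\pi$); consequently $\varsigma_n(x,\pi)K_\pi^{1/p}=\chi_{A_n}(x,\pi)\,\varsigma(x,\pi)K_\pi^{1/p}Q_n(\pi)\to\varsigma(x,\pi)K_\pi^{1/p}$ in $S_p$ for almost every $(x,\pi)$, with $\nu\otimes\mu$-integrable majorant $\|\varsigma(x,\pi)K_\pi^{1/p}\|_{S_p}^p$, and dominated convergence yields $\|\varsigma-\varsigma_n\|_{p,\nu\otimes\mu}\to 0$.

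The step I expect to be the main obstacle is exactly this approximation: since the Duflo--Moore operators $K_\pi$ are generally unbounded, the norms relevant for $p=1$, for general $p$, and for $p=2$ carry genuinely different weights $K_\pi$, $K_\pi^{1/p}$, $K_\pi^{1/2}$, so the plain log-convexity of scalar $L^p$-norms is unavailable and $K_\pi$ must be tamed by spectral truncation while keeping measurability of the fields $Q_n(\pi)$ and checking that the truncated symbols really lie in $L^2(G\times\hat G,S_2,d\nu\otimes d\mu)$. A cleaner alternative, avoiding explicit approximants, is to interpolate the linear map $\varsigma\mapsto W_\varsigma$ directly between its endpoint bounds $L^1(G\times\hat G,S_1,d\nu\otimes d\mu)\to S_1$ and $L^2(G\times\hat G,S_2,d\nu\otimes d\mu)\to S_2$: with $[S_1,S_2]_\theta=S_p$ for $\tfrac1p=1-\tfrac\theta2$, and the complex interpolation space $[L^1(G\times\hat G,S_1,d\nu\otimes d\mu),L^2(G\times\hat G,S_2,d\nu\otimes d\mu)]_\theta$ identified with $L^p(G\times\hat G,S_p,d\nu\otimes d\mu)$ by means of the analytic family of weights $z\mapsto K_\pi^{1-z/2}$ --- the same device underlying the Hausdorff--Young inequality of Theorem~\ref{th153} --- one gets $W_\varsigma\in S_p$, hence compact, for every $1\le p\le 2$.
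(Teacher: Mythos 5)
Your proof is correct and follows the same skeleton as the paper's: approximate the symbol in $\|\cdot\|_{p,\nu\otimes\mu}$ by symbols whose Weyl transforms are Hilbert--Schmidt (Proposition~\ref{prop156}), use the operator-norm bound of Proposition~\ref{prop153} to get convergence of the operators, and conclude from norm-closedness of the compacts. The only real difference is in how the approximants are produced: the paper simply takes a sequence in $C_c(G\times\hat G)$ converging to $\varsigma$ in the weighted $L^p$-norm and treats membership of such symbols in $L^2(G\times\hat G,S_2,d\nu\otimes d\mu)$ as automatic, whereas you build the approximants explicitly by a triple truncation, the essential ingredient being the spectral cut-off $Q_n(\pi)=E_{K_\pi}([1/n,n])$ that reconciles the different weights $K_\pi^{1/p}$ and $K_\pi^{1/2}$. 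In the nonunimodular case this is exactly the point the paper glosses over (a compactly supported continuous symbol need not compose boundedly with the unbounded $K_\pi^{1/2}$), so your version is the more careful one; the price is the need to check measurability of the field $\pi\mapsto Q_n(\pi)$ and injectivity of $K_\pi$, both of which hold in the Duflo--Moore framework. Your closing alternative via complex interpolation between the $S_1$ and $S_2$ endpoint bounds would also work and would give the stronger conclusion $W_\varsigma\in S_p$, but as stated it is only a sketch, since the identification of the interpolation space with $L^p(G\times\hat G,S_p,d\nu\otimes d\mu)$ with the correct $K_\pi$-weights is itself a nontrivial step.
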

\begin{proof}
Let $\varsigma\in L^p(G \times \hat{G},S_p,d\nu \otimes d\mu)$ and $(\varsigma_n)$ be a sequence
in $C_c(G \times \hat{G})$ such that $\|\varsigma_n - \varsigma\|_{p,\nu \otimes \mu}\rightarrow 0.$
From Proposition \ref{prop153}, it follows that $W_{\varsigma_n}$ converges to $W_\varsigma$ in
operator norm. But in view of Proposition \ref{prop156}, $W_{\varsigma_n}$ is a compact operator for each $n.$
Hence $W_{\varsigma}$ is a compact operator.
\end{proof}

Now, we shall see whether the Weyl transform defined in (\ref{exp152}) can be extended as a bounded operator
for $p>2.$ Next result gives a necessary and sufficient condition for boundedness of the Weyl transform in
terms of the Wigner transform.
\begin{proposition}\label{prop154}
Let $2< p<\infty$ and $\frac{1}{p}+\frac{1}{p'}=1.$ Then for all
$\varsigma\in L^p(G \times \hat{G},S_p,d\nu \otimes d\mu),$ the Weyl
transform $W_{\varsigma}$ is a bounded operator on $L^2(G)$ if and
only if there exists a constant $C$ such that
$\|V(f,g)\|_{p',\nu \otimes \mu}\leq C\|f\|_2\|g\|_2$ for all $f,g\in L^2(G).$
\end{proposition}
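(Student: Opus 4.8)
The strategy is to read this as a duality statement built on the defining identity $\langle W_{\varsigma}f,\bar g\rangle=\langle V(f,g),\varsigma\rangle_{\nu\otimes\mu}$ of Definition \ref{def151}, combined with the fact that the trace pairing $(\varsigma,\Theta)\mapsto\int_{G\times\hat{G}}\mathrm{tr}\bigl(\varsigma^{*}(x,\pi)\Theta(x,\pi)K_{\pi}\bigr)\,d\mu(\pi)d\nu(x)$ realises $L^{p'}(G\times\hat{G},S_{p'},d\nu\otimes d\mu)$ as the dual of $L^{p}(G\times\hat{G},S_{p},d\nu\otimes d\mu)$ for $1\le p<\infty$; this is precisely the pairing under which $\|\cdot\|_{p,\nu\otimes\mu}$ and $\|\cdot\|_{p',\nu\otimes\mu}$ are dual norms, resting on $S_{p}^{*}\cong S_{p'}$ and on $\sigma$-finiteness of $d\nu\otimes d\mu$. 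Everywhere the trace H\"older inequality $|\mathrm{tr}(A^{*}B)|\le\|A\|_{S_{p}}\|B\|_{S_{p'}}$ and the $(x,\pi)$-H\"older inequality will be invoked exactly as in the proof of Proposition \ref{prop153}.

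For the sufficiency direction, I would assume $\|V(f,g)\|_{p',\nu\otimes\mu}\le C\|f\|_{2}\|g\|_{2}$ for all $f,g\in L^{2}(G)$. Given $\varsigma\in L^{p}(G\times\hat{G},S_{p},d\nu\otimes d\mu)$, pick $\varsigma_{n}\in C_{c}(G\times\hat{G})$ with $\|\varsigma_{n}-\varsigma\|_{p,\nu\otimes\mu}\to0$. Each $W_{\varsigma_{n}}$ is well defined and Hilbert--Schmidt by Proposition \ref{prop156}, and repeating the estimate of Proposition \ref{prop153} gives, for $f,g\in L^{2}(G)$,
\[
|\langle W_{\varsigma_{n}}f,\bar g\rangle|=|\langle V(f,g),\varsigma_{n}\rangle_{\nu\otimes\mu}|\le\|\varsigma_{n}\|_{p,\nu\otimes\mu}\,\|V(f,g)\|_{p',\nu\otimes\mu}\le C\,\|\varsigma_{n}\|_{p,\nu\otimes\mu}\,\|f\|_{2}\|g\|_{2},
\]
hence $\|W_{\varsigma_{n}}\|\le C\|\varsigma_{n}\|_{p,\nu\otimes\mu}$ and, by linearity in the symbol, $\|W_{\varsigma_{n}}-W_{\varsigma_{m}}\|\le C\|\varsigma_{n}-\varsigma_{m}\|_{p,\nu\otimes\mu}$. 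Thus $(W_{\varsigma_{n}})$ is Cauchy in operator norm, its limit does not depend on the chosen sequence, and it is the desired bounded extension $W_{\varsigma}$, with $\|W_{\varsigma}\|\le C\|\varsigma\|_{p,\nu\otimes\mu}$.

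For the necessity direction, I would assume $W_{\varsigma}$ is bounded on $L^{2}(G)$ for every $\varsigma\in L^{p}(G\times\hat{G},S_{p},d\nu\otimes d\mu)$. The first step is to upgrade this to a uniform bound $K:=\sup\{\|W_{\varsigma}\|:\|\varsigma\|_{p,\nu\otimes\mu}\le1\}<\infty$ by applying the closed graph theorem to the linear map $\varsigma\mapsto W_{\varsigma}$ from $L^{p}$ into $\mathcal{B}(L^{2}(G))$: if $\varsigma_{n}\to\varsigma$ in $L^{p}$ and $W_{\varsigma_{n}}\to T$ in operator norm, then testing against $f,g\in C_{c}(G)$ and using the identity $\langle W_{\varsigma}f,\bar g\rangle=\langle V(f,g),\varsigma\rangle_{\nu\otimes\mu}$ on the dense class $C_{c}(G\times\hat{G})$ --- where $V(f,g)\in L^{2}(G\times\hat{G},S_{2})$ by Proposition \ref{prop151} and the relevant pairings are unambiguous --- forces $T=W_{\varsigma}$, so the map is closed, hence bounded. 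Then, for $f,g\in L^{2}(G)$ and any $\varsigma\in C_{c}(G\times\hat{G})$ with $\|\varsigma\|_{p,\nu\otimes\mu}\le1$,
\[
|\langle V(f,g),\varsigma\rangle_{\nu\otimes\mu}|=|\langle W_{\varsigma}f,\bar g\rangle|\le\|W_{\varsigma}\|\,\|f\|_{2}\|g\|_{2}\le K\,\|f\|_{2}\|g\|_{2};
\]
taking the supremum over such $\varsigma$ (dense in the unit ball of $L^{p}$) and invoking the duality $(L^{p})^{*}\cong L^{p'}$ gives $V(f,g)\in L^{p'}(G\times\hat{G},S_{p'},d\nu\otimes d\mu)$ with $\|V(f,g)\|_{p',\nu\otimes\mu}\le K\|f\|_{2}\|g\|_{2}$, so $C=K$ works.

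I expect the main obstacle to be precisely this uniform-boundedness (closed graph) step. Since for $p>2$ the Wigner transform $V(f,g)$ is not known a priori to belong to $L^{p'}$ --- Proposition \ref{prop151} supplies membership only for $p'\in[2,\infty]$ --- the pairing $\langle V(f,g),\varsigma\rangle_{\nu\otimes\mu}$ and the limit argument verifying closedness of $\varsigma\mapsto W_{\varsigma}$ must be run carefully on the dense subspace $C_{c}(G\times\hat{G})$, where $V(f,g)$ and $\varsigma$ are both square integrable and the $L^{2}$ and $L^{p'}$--$L^{p}$ pairings coincide. The remaining ingredient, checking that the weighted trace pairing genuinely identifies $L^{p'}$ with the dual of $L^{p}$ in the normalisation fixed above, is routine.
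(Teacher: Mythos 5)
Your overall architecture is the paper's: sufficiency is the H\"older estimate of Proposition \ref{prop153} run with the hypothesised bound on $\|V(f,g)\|_{p',\nu\otimes\mu}$, and necessity is a Baire-category uniform bound followed by the duality $\|V(f,g)\|_{p',\nu\otimes\mu}=\sup_{\|\varsigma\|_{p,\nu\otimes\mu}=1}|\langle V(f,g),\varsigma\rangle_{\nu\otimes\mu}|$ and a density argument. The one genuine divergence is the mechanism for the uniform bound: the paper applies the uniform boundedness principle to the family of functionals $\mathcal{Q}_{f,g}(\varsigma)=\langle W_\varsigma f,g\rangle$ indexed by normalized $f,g\in C_c(G)$, whereas you apply the closed graph theorem to the map $\varsigma\mapsto W_\varsigma$. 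Your sufficiency half is actually more careful than the paper's one-line appeal to Proposition \ref{prop153}, since you explain how $W_\varsigma$ is even defined for $p>2$ (as the operator-norm limit of $W_{\varsigma_n}$); that is a worthwhile addition.

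There is, however, a gap in your closedness verification, which you flag but do not resolve. To pass from $\langle W_{\varsigma_n}f,\bar g\rangle=\langle V(f,g),\varsigma_n\rangle_{\nu\otimes\mu}$ to $\langle V(f,g),\varsigma\rangle_{\nu\otimes\mu}$ along an $L^p$-convergent sequence you need $V(f,g)$ to lie in $(L^p)^*\cong L^{p'}$ -- precisely what the proposition is establishing. Retreating to $\varsigma_n\in C_c(G\times\hat{G})$ does not help: the sequences arising in a closed-graph verification are arbitrary, and even for compactly supported symbols $L^p$-convergence does not control the pairing against a function known only to lie in the $p'\in[2,\infty]$ classes of Proposition \ref{prop151}. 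The repair is to show first, for each \emph{fixed} $f,g\in C_c(G)$, that $V(f,g)\in L^{p'}(G\times\hat{G},S_{p'},d\nu\otimes d\mu)$ (with no uniformity): the hypothesis makes $\varsigma\mapsto\langle V(f,g),\varsigma\rangle_{\nu\otimes\mu}$ an everywhere-defined linear functional on the $\sigma$-finite space $L^p(G\times\hat{G},S_p,d\nu\otimes d\mu)$, and the converse of H\"older's inequality (equivalently, a closed-graph argument for this scalar functional, checked along an a.e.\ convergent subsequence) gives the membership. Only after this is either your closed-graph step or the paper's uniform-boundedness step legitimate; note that the paper's proof leans on the same unproved point when it declares each $\mathcal{Q}_{f,g}$ to be a \emph{bounded} linear functional. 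As written, your necessity argument does not close without this intermediate step.
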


\begin{proof}
A similar argument as in Proposition \ref{prop153} proves that $W_{\varsigma}$ is bounded for all
$\varsigma\in L^p(G \times \hat{G},S_p,d\nu \otimes d\mu),$ whenever there exists a constant $C$
such that $\|V(f,g)\|_{p',\nu \otimes \mu}\leq C\|f\|_2\|g\|_2$ for all $f,g\in L^2(G).$

Conversely, assume that  $W_{\varsigma}$ is bounded for each $\varsigma\in L^p(G \times \hat{G},S_p,d\nu \otimes d\mu).$
Then there exists a constant $C_\varsigma$ such that $\|W_{\varsigma}f\|_2\leq C_\varsigma \|f\|_2$ for all $f\in L^2(G).$
For $f,g\in C_c(G)$ with $\|f\|_2=\|g\|_2=1,$ define the bounded linear functional on
$L^p(G \times \hat{G},S_p,d\nu \otimes d\mu)$ by $\mathcal{Q}_{f,g}(\varsigma)=\langle W_\varsigma f,g\rangle.$
Then $\sup |\mathcal{Q}_{f,g}(\varsigma)|\leq C_\varsigma,$ where the supremum is over all $f,g\in C_c(G)$ with
$\|f\|_2=\|g\|_2=1.$ Therefore, by the uniform boundedness principle, there exists a constant $C$ such that
$\|\mathcal{Q}_{f,g}\|\leq C$ for all $f,g\in C_c(G)$ with $\|f\|_2=\|g\|_2=1.$ Hence
$|\langle W_\varsigma f,g\rangle|\leq C\|\varsigma\|_{p,\nu \otimes \mu}\|f\|_2\|g\|_2,$
that is, $\|W_\varsigma\|\leq C\|\varsigma\|_{p,\nu \otimes \mu}.$ Thus for $f,g\in C_c(G),$
\begin{align*}
\|V(f,g)\|_{p',\nu \otimes \mu}=\sup\limits_{\|\varsigma\|_{p,\nu \otimes \mu}=1}
|\langle V(f,g),\varsigma \rangle_{\nu \otimes \mu}|
=\sup\limits_{\|\varsigma\|_{p,\nu \otimes \mu}=1}|\langle W_\varsigma f,\bar{g}\rangle| \leq C\|f\|_2\|g\|_2.
\end{align*}
The density argument completes the proof.
\end{proof}
A sufficient condition for unboundedness of the Weyl transform is obtained in the following result.
\begin{proposition}\label{prop155}
Let $2<p<\infty$ and $f$ be a square integrable, compactly supported function on $G$ with
$\int_G f(x)d\nu(x)\neq 0.$ If $W_\varsigma$ is a bounded operator on $L^2(G)$ for all
$\varsigma\in L^p(G \times \hat{G},S_p,d\nu \otimes d\mu),$
then $\int_{\hat{G}}\|\mathcal{F}_p(f)\pi\|_{S_{p'}}^{p'} ~d\mu(\pi)<\infty.$
\end{proposition}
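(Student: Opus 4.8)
The plan is to read off the conclusion from Propositions \ref{prop152} and \ref{prop154} together with the compact support of $f$. Since $f$ is square integrable and compactly supported it lies in $L^1\cap L^2(G)$; let $E$ denote (the closure of) its support and put $S=E\cdot E$, a compact set, so $\nu(S)<\infty$. Write $\mathrm{C}=\int_G f\,d\nu\ne 0$. The standing hypothesis that $W_\varsigma$ is bounded on $L^2(G)$ for \emph{every} $\varsigma\in L^p(G\times\hat{G},S_p,d\nu\otimes d\mu)$ is exactly the condition appearing in Proposition \ref{prop154}, so there is a constant $C$ with $\|V(u,v)\|_{p',\nu\otimes\mu}\le C\|u\|_2\|v\|_2$ for all $u,v\in L^2(G)$. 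Specialising to $u=v=f$ yields $\|V(f,f)\|_{p',\nu\otimes\mu}\le C\|f\|_2^2<\infty$, which is the only input we will need from the boundedness assumption.

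The next step is to recover the Fourier transform of $f$ from $V(f,f)$. Proposition \ref{prop152}, applied with $g=f$ (this is where $\mathrm{C}\ne 0$ enters), gives
\[\pi(f)=\mathrm{C}^{-1}\int_G V(f,f)(x,\pi)\,d\nu(x),\qquad \pi\in\hat{G}.\]
Moreover $V(f,f)(x,\pi)=\pi\!\left(f\cdot\tau f(x)\right)$, and $\left(f\cdot\tau f(x)\right)(x')=f(x')f(x'^{-1}x)$ vanishes for all $x'$ once $x\notin E\cdot E=S$; hence the integral above is effectively over the compact set $S$. Multiplying on the right by $K_\pi^{1/p'}$ and using $\mathcal{F}_p(f)\pi=\pi(f)K_\pi^{1/p'}$ (which is meaningful because $f\in L^1(G)$), we get $\mathcal{F}_p(f)\pi=\mathrm{C}^{-1}\int_S V(f,f)(x,\pi)K_\pi^{1/p'}\,d\nu(x)$, so that for almost every $\pi$,
\[\|\mathcal{F}_p(f)\pi\|_{S_{p'}}\le|\mathrm{C}|^{-1}\int_S\|V(f,f)(x,\pi)K_\pi^{1/p'}\|_{S_{p'}}\,d\nu(x).\]

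Finally, take the $L^{p'}(\hat{G},d\mu)$-norm in $\pi$ of both sides, move the norm inside the $x$-integral by Minkowski's integral inequality, and then apply Hölder's inequality in $x$ over the finite-measure set $S$:
\begin{align*}
\left(\int_{\hat{G}}\|\mathcal{F}_p(f)\pi\|_{S_{p'}}^{p'}\,d\mu(\pi)\right)^{1/p'}
&\le|\mathrm{C}|^{-1}\int_S\left(\int_{\hat{G}}\|V(f,f)(x,\pi)K_\pi^{1/p'}\|_{S_{p'}}^{p'}\,d\mu(\pi)\right)^{1/p'}d\nu(x)\\
&\le|\mathrm{C}|^{-1}\,\nu(S)^{1/p}\,\|V(f,f)\|_{p',\nu\otimes\mu}\ \le\ |\mathrm{C}|^{-1}\,\nu(S)^{1/p}\,C\,\|f\|_2^2\ <\ \infty,
\end{align*}
which is the assertion. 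Everything here is routine bookkeeping except for two points. First, one must know the identity $V(f,f)(x,\pi)=\pi(f\cdot\tau f(x))$ — hence the compactness of the $x$-support of $V(f,f)$ — for $f\in L^1\cap L^2(G)$ and not merely for $f\in C_c(G)$, which follows from the definition of $V$ by a standard density argument. Second, when $G$ is not unimodular one should check that the (possibly unbounded, positive) operator $K_\pi^{1/p'}$ may be pulled through the operator-valued integral over $S$; I expect this manipulation with the semi-invariant operators $K_\pi$ to be the only genuinely delicate step, and in the unimodular case $K_\pi=I_{\mathcal{H}_\pi}$ it disappears entirely.
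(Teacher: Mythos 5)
Your proposal is correct and follows essentially the same route as the paper's proof: express $\mathcal{F}_p(f)\pi$ via Proposition \ref{prop152} as an integral of $V(f,f)(\cdot,\pi)K_\pi^{1/p'}$ over the compact set $S=E\cdot E$, then apply Minkowski's integral inequality and H\"older's inequality and invoke Proposition \ref{prop154} to bound $\|V(f,f)\|_{p',\nu\otimes\mu}$ by $C\|f\|_2^2$. Your closing remarks about extending the identity $V(f,f)(x,\pi)=\pi(f\cdot\tau f(x))$ to $f\in L^1\cap L^2(G)$ and about commuting $K_\pi^{1/p'}$ with the operator-valued integral are points the paper glosses over, but they do not change the argument.
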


\begin{proof}
Let $f$ be supported on a compact set $K.$ Then $\tilde{K}=KK=\{xy:x,y\in K\}$ is compact and $V(f,f)$
is supported on $\tilde{K}\times \hat{G}.$ In view of Proposition \ref{prop152}, instead of
$\int_{\hat{G}}\|\mathcal{F}_p(f)\pi\|_{S_{p'}}^{p'} ~d\mu(\pi),$ it is enough to consider the
following integral
\begin{align*}
\int_{\hat{G}}\left\|\int_G V(f,f)(x,\pi)K_\pi^{1/p'}d\nu(x)\right\|_{S_{p'}}^{p'}d\mu(\pi).
\end{align*}
Applying Minkowski's integral inequality and H\"{o}lder's inequality, the above integral is less than or equal to
\begin{align*}
&\left(\int_{\tilde{K}}\left(\int_{\hat{G}}\|V(f,f)(x,\pi)K_\pi^{1/p'}\|_{S_{p'}}^{p'}d\mu(\pi)\right)^{\frac{1}{p'}}
d\nu(x)\right)^{p'} \\
\leq &\left(\int_{\tilde{K}}d\nu(x)\right)^{p'/p}\int_{\tilde{K}}\int_{\hat{G}}\|V(f,f)(x,\pi)K_\pi^{1/p'}\|_{S_{p'}}^{p'}
d\mu(\pi)d\nu(x).
\end{align*}
Hence by Proposition \ref{prop154}, it completes the proof.
\end{proof}

To prove that the Weyl transform $W_\varsigma$ can not be extended as a bounded operator for
$\varsigma\in L^p(G \times \hat{G},S_p,d\nu \otimes d\mu),$ where $2<p<\infty,$ it is enough
to consider the following problem.

\smallskip

\noindent\textbf{Question :} For $p\in(2,\infty),$ does there exists a square integrable, compactly supported function
$f$ on $G$ with $\int_G f(x)d\nu(x)\neq 0,$ such that $\int_{\hat{G}}\|\mathcal{F}_p(f)\pi\|_{S_{p'}}^{p'}~d\mu(\pi)=\infty$ ?

\smallskip

In \cite{Si}, Simon gave the example of such functions for $\mathbb{R}^n,$ and later on, it is considered
for the Heisenberg group \cite{PZ}, quaternion Heisenberg group \cite{CZ} and upper half plane \cite{PZ2}.
In this article, we define such functions for the Euclidean motion group and Heisenberg motion group.

\section{Euclidean motion group}
In this section, we briefly discuss the Fourier analysis on the Euclidean motion group. Thereafter, we
shall precisely write down the formula for the Weyl transform in this setup and prove that it can not
be extended as a bounded operator for the symbol in the corresponding $L^p$ spaces with $2<p<\infty.$

\smallskip

Let $n\in\mathbb N\setminus \{1\}$ and $SO(n)$ be the special orthogonal group of order $n.$ Then Euclidean
motion group $M(n)$ is the semidirect product of $\mathbb{R}^n$ with $K=SO(n).$ The group law on $M(n)$ can be
expressed as
\[(x_1,k_1)(x_2,k_2)=(x_1+k_1\cdot x_2,k_1k_2),\]
where $x_1,x_2\in\mathbb R^n$ and $k_1,k_2\in K.$ The Haar measure on $M(n)$ can be written as $d\nu(x,k)=dxdk,$
where $dx$ and $dk$ are the Haar measures on $\mathbb R^n$ and $K$ respectively. The Fourier analysis on
$M(n)$ can be discussed in the following two cases. For details, see \cite{KK,ST,Su}.

\smallskip

\noindent\textbf{For $n=2$ :} An arbitrary element of $M(2)$ can be written as $(z,e^{i\varphi}),$ where
$z\in \mathbb C,\varphi\in \mathbb R.$ All the, upto unitarily equivalent, infinite dimensional irreducible
unitary representations of $M(2)$ are parametrized by $a>0.$ Moreover, for each $a>0,$ the representation
$\pi_a,$ realized on $L^2(S^1),$ is given by
\[\pi_a(z,e^{i\varphi})g(\theta)=e^{i\text{Re}(ae^{i\theta}z)}g(\theta-\varphi),\]
where $g\in L^2(S^1).$ The Plancherel measure on $(0,\infty)$ is $d\mu(a)=ada,$
where $da$ is the Lebesgue measures on $(0,\infty).$ Further, for $f\in L^1(M(2))$ the group
Fourier transform, defined by
\begin{align*}
\pi_a(f)=\hat{f}(a)=\int_{M(2)}f(z,e^{i\varphi})\pi_a(z,e^{i\varphi})dzd\varphi,
\end{align*}
is a bounded operator on $L^2(S^1).$ For $\varsigma\in L^p(M(2) \times (0,\infty),S_p,d\nu \otimes d\mu)$
and $f\in L^2(M(2)),$ the Weyl transform $W_\varsigma$ takes the form
\begin{align*}
W_\varsigma f(z,e^{i\varphi})=\int_0^\infty \int_{M(2)}\text{tr}\left(\varsigma^*((w,e^{i\psi})(z,e^{i\varphi}))
\pi_a(w,e^{i\psi})\right)f(w,e^{i\psi})a dw d\psi da.
\end{align*}

\smallskip

\noindent\textbf{For $n\geq 3$ :} Let $M=SO(n-1)$ be the subgroup of $K$ that fixes the
point $e_1=(1,0,\ldots,0).$ Let $\hat M$ be the unitary dual group of $M.$ Given a unitary
irreducible representation $\sigma\in\hat M,$ realized on the Hilbert space $\mathcal H_\sigma$
of dimension $d_\sigma,$ consider the space $L^2(K,\mathbb C^{d_\sigma\times d_\sigma})$
consisting of $d_\sigma\times d_\sigma$ complex matrices valued functions $\varphi$ on $K$
such that $\varphi(uk)=\sigma(u)\varphi(k),$ where $u\in M,~k\in K,$ and satisfying
\[\int_K\text{tr}(\varphi(k)^\ast\varphi(k))dk<\infty.\]
For $(a,\sigma)\in (0,\infty)\times\hat M,$ a unitary representation $\pi_{a,\sigma}$
of $M(n)$ is defined by
\begin{equation*}
\pi_{a,\sigma}(x,k)(\varphi)(s)=e^{ia\left\langle s^{-1}\cdot e_1,\, x\right\rangle}\varphi(sk),
\end{equation*}
where $\varphi\in L^2\left(K,\mathbb C^{d_\sigma\times d_\sigma}\right).$ These are all, up to
unitary equivalent, infinite dimensional, unitary, irreducible representations that appear in
the Plancherel formula. The group Fourier transform of a function $f\in L^1(M(n))$ is defined by
\[\hat f(a,\sigma)=\int_{M(n)} f(x,k)\pi_{a,\,\sigma}(x,k)dx dk.\]
Further, for $f\in L^1\cap L^2(M(n)),$ Plancheral formula holds
\[\int_{M(n)} |f(x,k)|^2 dx dk=\mathsf{c}_n\int_0^\infty
\left(\sum_{\sigma\in\hat{M}}d_\sigma\|\hat f(a,\sigma)\|_{HS}^2 \right)a^{n-1}da,\]
for some constant $\mathsf{c}_n.$

\smallskip

Next, we construct the required square integrable and compactly supported functions for $M(n).$ To
do so, we need to discuss some properties of Bessel functions.

\smallskip

For $\upsilon\in \mathbb{R},$ first kind Bessel function $J_\upsilon$ of order $\upsilon$ is defined by
\begin{align}\label{exp172}
J_\upsilon(t)=\sum_{l=0}^\infty \dfrac{(-1)^l (\frac{t}{2})^{\upsilon+2l}}{l!\, \Gamma(l+\upsilon+1)},
\end{align}
where $t$ is a non-negative real number. Then for $n\geq 2,$ $a\in (0,\infty)$ and $x\in\mathbb R^n,$
the following relation holds
\begin{align}\label{exp175}
\int_{S^{n-1}} e^{ia x\cdot \omega} d\omega=c_n \, (a|x|)^{1-\frac{n}{2}} J_{\frac{n}{2}-1}(a|x|),
\end{align}
where $c_n>0$ is a constant depending only on $n$ and $d\omega$ is the surface measure on $S^{n-1}.$

If $\xi,\zeta$ and $\eta$ are complex parameters with $\eta\neq 0,-1,\ldots,$ then the complex power series
\begin{align}\label{exp170}
\sum_{l=0}^\infty \dfrac{(\xi)_l(\zeta)_l}{(\eta)_l}\,\frac{z^l}{l\,!},
\end{align}
converges for $|z|<1,$ where $(\xi)_0=1$ and $(\xi)_l=\xi(\xi+1)\cdots(\xi+l-1)$ for $l\geq 1.$
The sum of the above series is denoted by ${}_2F_1(\xi,\zeta;\eta \, ;z)$ and called it hypergeometric function.
Moreover, if Re$(\eta-\xi-\zeta)>0,$ then the series (\ref{exp170}) also converges for $|z|=1$ and
\begin{align}\label{exp171}
{}_2F_1(\xi,\zeta;\eta \,;1)=\frac{\Gamma(\eta)\Gamma(\eta-\xi-\zeta)}{\Gamma(\eta-\xi) \Gamma(\eta-\zeta)}.
\end{align}
For more details, see \cite{AAR}, page no. 62-66.

The following property of Bessel functions hold, see \cite{W}, page no. 385.
\begin{lemma}\cite{W}\label{lemma170}
Let $u,\alpha,\upsilon\in\mathbb R$ be such that $u,\alpha>0$ and $\upsilon\geq 0.$ Then
\begin{align*}
\int_0^\infty e^{-ut}t^{\alpha-1}J_\upsilon(t)dt
&=\dfrac{(\frac{1}{2})^\upsilon \Gamma(\alpha+\upsilon)}{(1+u^2)^{\frac{\alpha+\upsilon}{2}}
\Gamma(\upsilon+1)}\,{}_2F_1\left(\frac{\alpha+\upsilon}{2},\frac{1-\alpha+\upsilon}{2};1+\upsilon \, ; \frac{1}{1+u^2}\right),
\end{align*}
where ${}_2F_1$ is the hypergeometric function as defined by the series (\ref{exp170}).
\end{lemma}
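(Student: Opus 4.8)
The plan is to prove the formula by the most direct classical route: expand $J_\upsilon$ in the power series (\ref{exp172}), integrate term by term, recognise the result as a Gauss hypergeometric function of $-1/u^{2}$, and pass to the asserted form by a Pfaff transformation.

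First I would settle convergence. Since $\alpha>0$ and $\upsilon\ge 0$, the integrand $e^{-ut}t^{\alpha-1}J_\upsilon(t)$ is comparable to $t^{\alpha+\upsilon-1}$ near $t=0$ and is $O\!\left(t^{\alpha-3/2}e^{-ut}\right)$ as $t\to\infty$ (using $J_\upsilon(t)=O(t^{-1/2})$), so the integral converges absolutely for every $u>0$ and is holomorphic in $u$ on $\{\mathrm{Re}\,u>0\}$. For $u>1$ the sum of the term-wise absolute values integrates to $\int_0^\infty e^{-ut}t^{\alpha-1}I_\upsilon(t)\,dt<\infty$, where $I_\upsilon$ is the modified Bessel function, so term-by-term integration is legitimate there; using $\int_0^\infty e^{-ut}t^{\beta-1}\,dt=\Gamma(\beta)u^{-\beta}$ this gives
\[\int_0^\infty e^{-ut}t^{\alpha-1}J_\upsilon(t)\,dt=\sum_{l=0}^\infty\frac{(-1)^l\,\Gamma(\alpha+\upsilon+2l)}{2^{\upsilon+2l}\,l!\,\Gamma(l+\upsilon+1)}\,u^{-(\alpha+\upsilon+2l)}\qquad(u>1).\]

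Next I would identify this power series. Writing $\alpha+\upsilon+2l=2\left(\tfrac{\alpha+\upsilon}{2}+l\right)$ and $\alpha+\upsilon+2l+1=2\left(\tfrac{\alpha+\upsilon+1}{2}+l\right)$, the ratio of the $(l+1)$-st term to the $l$-th term equals $-\dfrac{\left(\frac{\alpha+\upsilon}{2}+l\right)\left(\frac{\alpha+\upsilon+1}{2}+l\right)}{(l+\upsilon+1)(l+1)}\cdot\dfrac{1}{u^{2}}$, which is exactly the term ratio of ${}_2F_1\!\left(\tfrac{\alpha+\upsilon}{2},\tfrac{\alpha+\upsilon+1}{2};\upsilon+1;-\tfrac{1}{u^{2}}\right)$. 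Since the $l=0$ term is $\dfrac{\Gamma(\alpha+\upsilon)}{2^\upsilon\,\Gamma(\upsilon+1)\,u^{\alpha+\upsilon}}$, the integral equals
\[\frac{\Gamma(\alpha+\upsilon)}{2^\upsilon\,\Gamma(\upsilon+1)\,u^{\alpha+\upsilon}}\;{}_2F_1\!\left(\tfrac{\alpha+\upsilon}{2},\tfrac{\alpha+\upsilon+1}{2};\upsilon+1;-\tfrac{1}{u^{2}}\right),\qquad u>1.\]
Applying Pfaff's transformation ${}_2F_1(a,b;c;z)=(1-z)^{-a}\,{}_2F_1\!\left(a,\,c-b;\,c;\,\tfrac{z}{z-1}\right)$ with $a=\tfrac{\alpha+\upsilon}{2}$, $b=\tfrac{\alpha+\upsilon+1}{2}$, $c=\upsilon+1$, $z=-1/u^{2}$ — so that $c-b=\tfrac{1-\alpha+\upsilon}{2}$, $\tfrac{z}{z-1}=\tfrac{1}{1+u^{2}}$, and $u^{-(\alpha+\upsilon)}(1-z)^{-a}=(1+u^{2})^{-(\alpha+\upsilon)/2}$ — converts the last display into precisely the right-hand side of the lemma (with $2^{-\upsilon}=(\tfrac12)^\upsilon$), valid for $u>1$.

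Finally, the right-hand side is itself holomorphic on $\{\mathrm{Re}\,u>0\}$: there $1+u^{2}$ avoids $(-\infty,0]$, so the principal branch of $(1+u^{2})^{-(\alpha+\upsilon)/2}$ is defined, and $\tfrac{1}{1+u^{2}}$ avoids $[1,\infty)$, while the transformed hypergeometric series has $c-a-b=\tfrac12>0$ and so converges, even at the endpoint $\tfrac{1}{1+u^{2}}=1$. As the two holomorphic functions agree for $u>1$, the identity theorem extends the equality to all of $\{\mathrm{Re}\,u>0\}$, in particular to all $u>0$, proving the lemma. I do not expect any deep difficulty here: the two places that genuinely need care are the justification of the interchange of sum and integral, which is only directly available for $u>1$ (the natural majorant being the Laplace transform of $I_\upsilon$, which diverges for $u\le 1$) and therefore forces the concluding analytic-continuation step rather than a direct computation, and the bookkeeping in the Pfaff transformation — choosing the correct one of the two symmetric forms and simplifying $z/(z-1)$ and $(1-z)^{-a}$ without error.
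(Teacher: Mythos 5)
The paper offers no proof of this lemma: it is quoted verbatim from Watson (\S 13.2 of \cite{W}), so there is no in-paper argument to compare against. Your proof is correct and is essentially the classical derivation of that formula: term-by-term Laplace transformation of the Bessel series (justified for $u>1$ by the majorant $I_\upsilon$), identification of the resulting series as $\tfrac{\Gamma(\alpha+\upsilon)}{2^{\upsilon}\Gamma(\upsilon+1)}u^{-(\alpha+\upsilon)}\,{}_2F_1\bigl(\tfrac{\alpha+\upsilon}{2},\tfrac{\alpha+\upsilon+1}{2};\upsilon+1;-u^{-2}\bigr)$, a Pfaff transformation (all the bookkeeping — $c-b=\tfrac{1-\alpha+\upsilon}{2}$, $z/(z-1)=\tfrac{1}{1+u^{2}}$, $(1-z)^{-a}u^{-(\alpha+\upsilon)}=(1+u^{2})^{-(\alpha+\upsilon)/2}$ — checks out), and analytic continuation in $u$ to the half-plane $\mathrm{Re}\,u>0$, where both sides are indeed holomorphic since $1+u^{2}$ avoids $(-\infty,0]$ and $\tfrac{1}{1+u^{2}}$ avoids $[1,\infty)$. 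No gaps.
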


Next lemma is needful to prove the main result in this section.

\begin{lemma}\label{lemma171}
Let $n\in\mathbb N\setminus \{1\}$ and $\beta\in(\frac{n}{3},\frac{n}{2}).$ Then there exists $C>0$
and $a_0>0,$ both independent of $\beta,$ such that
\begin{align*}
\int_0^a t^{\frac{n}{2}-\beta} J_{\frac{n}{2}-1}(t)dt\geq C
\end{align*}
for all $a\geq a_0.$
\end{lemma}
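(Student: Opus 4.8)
The plan is to recognize $\int_0^a t^{\frac n2-\beta}J_{\frac n2-1}(t)\,dt$ as the $a\to\infty$ limit of a quantity that, after an Abel regularization, is computed in closed form by Lemma~\ref{lemma170}, and then to control the oscillatory tail so as to make that limit both quantitative and uniform in $\beta$.

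First I would set $\alpha=\tfrac n2-\beta+1$ and $\upsilon=\tfrac n2-1$, so that the integrand is $t^{\alpha-1}J_\upsilon(t)$ with $\upsilon\ge 0$ (as $n\ge 2$) and $\alpha>1>0$ (as $\beta<\tfrac n2$). For $u>0$, Lemma~\ref{lemma170} then evaluates the Laplace-type integral; simplifying the parameters via $\alpha+\upsilon=n-\beta$ and $1-\alpha+\upsilon=\beta-1$ gives
\begin{align*}
G(u):=\int_0^\infty e^{-ut}t^{\frac n2-\beta}J_{\frac n2-1}(t)\,dt
=\frac{(1/2)^{\frac n2-1}\,\Gamma(n-\beta)}{(1+u^2)^{\frac{n-\beta}{2}}\,\Gamma\!\left(\frac n2\right)}\,
{}_2F_1\!\left(\tfrac{n-\beta}{2},\tfrac{\beta-1}{2};\tfrac n2\,;\tfrac1{1+u^2}\right).
\end{align*}
Letting $u\to 0^+$, the argument of the hypergeometric function increases to $1$, and since $\tfrac n2-\tfrac{n-\beta}{2}-\tfrac{\beta-1}{2}=\tfrac12>0$, formula~(\ref{exp171}) evaluates it at $z=1$; cancelling $\Gamma(\tfrac n2)$ and using $\Gamma(\tfrac12)=\sqrt\pi$ one gets
\begin{align*}
\lim_{u\to 0^+}G(u)=\frac{(1/2)^{\frac n2-1}\sqrt\pi\;\Gamma(n-\beta)}{\Gamma\!\left(\frac\beta2\right)\Gamma\!\left(\frac{n-\beta+1}{2}\right)}=:L(\beta).
\end{align*}
All three Gamma factors are finite and positive for $\beta$ in the \emph{closed} interval $[\tfrac n3,\tfrac n2]$, so $L$ is continuous and strictly positive there; hence $L_0:=\min_{\beta\in[n/3,\,n/2]}L(\beta)>0$, and this number --- manifestly independent of $\beta$ --- will supply the constant $C$.

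The remaining and decisive step is to convert the Abelian statement $\lim_{u\to0^+}G(u)=L(\beta)$ into the claim that the partial integral $g_\beta(a):=\int_0^a t^{\frac n2-\beta}J_{\frac n2-1}(t)\,dt$ satisfies $g_\beta(a)\ge \tfrac12 L_0$ for all $a\ge a_0$, with $a_0$ independent of $\beta$. For this I would write $g_\beta(a)=L(\beta)-\int_a^\infty t^{\frac n2-\beta}J_{\frac n2-1}(t)\,dt$ and estimate the tail using the large-argument expansion $J_{\frac n2-1}(t)=\sqrt{2/(\pi t)}\,\cos\!\big(t-\tfrac{(n/2-1)\pi}{2}-\tfrac\pi4\big)+O_n(t^{-3/2})$, integrating by parts against the oscillation (equivalently, repeatedly applying the differential recursions $\tfrac{d}{dt}\big(t^{s}J_{\nu+1}(t)\big)=t^{s}J_{\nu}(t)+(s-\nu-1)t^{s-1}J_{\nu+1}(t)$ to peel off boundary terms of decreasing order). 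This should produce a bound $\big|\int_a^\infty t^{\frac n2-\beta}J_{\frac n2-1}(t)\,dt\big|\le C_n\,a^{-\delta}$ with $C_n,\delta>0$ depending only on $n$; it is exactly here that the hypothesis $\beta\in(\tfrac n3,\tfrac n2)$ is used, since it governs the exponent $\tfrac n2-\beta-\tfrac12$ carried by the boundary terms. Choosing $a_0$ so large that $C_n a_0^{-\delta}\le \tfrac12 L_0$ then yields $g_\beta(a)\ge L(\beta)-\tfrac12 L_0\ge \tfrac12 L_0=:C$ for all $a\ge a_0$. I expect the main obstacle to be precisely this last step: making the tail estimate --- and hence the convergence $g_\beta(a)\to L(\beta)$ --- \emph{uniform} in $\beta$ up to the endpoints of the interval. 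By contrast, the closed-form evaluation of $G(u)$ and of its $u\to 0^+$ limit is a routine application of Lemma~\ref{lemma170} together with~(\ref{exp171}).
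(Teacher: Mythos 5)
Your route is the same as the paper's --- pass to the Abel regularization $\int_0^\infty e^{-ut}t^{\frac n2-\beta}J_{\frac n2-1}(t)\,dt$, evaluate it by Lemma~\ref{lemma170}, and send $u\to0^+$ using (\ref{exp171}) --- and your closed-form limit $L(\beta)=\frac{(1/2)^{n/2-1}\sqrt{\pi}\,\Gamma(n-\beta)}{\Gamma(\beta/2)\,\Gamma(\frac{n-\beta+1}{2})}$ is correct; taking the minimum of this continuous positive function over the closed interval is in fact a cleaner way to get $\beta$-uniformity than the paper's case-by-case lower bounds on the hypergeometric series. The divergence from the paper occurs at the step you yourself flag as decisive, and there your plan breaks down. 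The tail bound $\bigl|\int_a^\infty t^{\frac n2-\beta}J_{\frac n2-1}(t)\,dt\bigr|\le C_n a^{-\delta}$ cannot hold throughout $\beta\in(\frac n3,\frac n2)$ once $n\ge4$. The large-$t$ asymptotics give $t^{\frac n2-\beta}J_{\frac n2-1}(t)\sim\sqrt{2/\pi}\,t^{\gamma}\cos(t-\phi_n)$ with $\gamma=\frac n2-\beta-\frac12$, and one integration by parts leaves a boundary term of size $a^{\gamma}$. This decays only when $\gamma<0$, i.e. $\beta>\frac{n-1}{2}$; since $\frac n3<\frac{n-1}{2}$ exactly when $n\ge4$, on the subinterval $(\frac n3,\frac{n-1}{2})$ the improper integral $\int_0^\infty t^{\frac n2-\beta}J_{\frac n2-1}(t)\,dt$ does not converge at all --- the partial integrals oscillate with growing amplitude $a^{\gamma}$ and are unbounded below --- so the identity $g_\beta(a)=L(\beta)-\int_a^\infty(\cdots)$ is not even available, and no constant $C>0$ can work for such $\beta$. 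Even on the good range $\beta>\frac{n-1}{2}$ the exponent $\gamma$ tends to $0^-$ at the left endpoint, so a single $\delta>0$ (hence a single $a_0$) independent of $\beta$ is not obtainable; the same degeneration occurs for $n=3$ as $\beta\downarrow1$. Only for $n=2$, where $\gamma\le-\frac16$ uniformly, does your argument close as written.

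In fairness, the difficulty you ran into is not of your making: the paper's own proof asserts, without justification, that $\int_0^a e^{-ut}t^{\frac n2-\beta}J_{\frac n2-1}(t)\,dt\ge C$ with $a_0,C$ independent of $u$ and $\beta$, which is precisely the uniform-tail statement that fails. Your honest attempt to supply the missing estimate exposes that the lemma is over-claimed for $n\ge4$. The statement (and both proofs) can be repaired by shrinking the range of $\beta$ to $(\max\{\frac n3,\frac{n-1}{2}\}+\epsilon,\frac n2)$, or simply by fixing a single $\beta>\frac{n-1}{2}$ and allowing $C,a_0$ to depend on it; that weaker version is all that the applications require, since in the theorems one is free to choose $\beta$ as close to $\frac n2$ as one likes. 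With that restriction, your tail argument (integration by parts against the oscillation, with the $O_n(t^{-3/2})$ remainder contributing an absolutely convergent piece) does go through and yields the desired lower bound $\tfrac12 L(\beta)$.
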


\begin{proof}
Let $\frac{n}{3}<\beta<\frac{n}{2}$ and $\alpha=\frac{n}{2}-\beta+1,$ then $1<\alpha<\frac{n}{6}+1.$
Again for $0<u<1,$ we have $\frac{1}{2}<\frac{1}{1+u^2}<1.$ Now, we shall find an independent of $u$
and positive lower bound of the integral define in Lemma \ref{lemma170}, when $\alpha=\frac{n}{2}-\beta+1$
and $\upsilon=\frac{n}{2}-1.$ For that, consider the following two cases.

\smallskip

\noindent\textbf{For $n=2$ :} Since $1<\alpha<\frac{4}{3},$ except for the first term, all other terms
in the series expansion of ${}_2F_1\left(\frac{\alpha}{2},\frac{1-\alpha}{2};1;\frac{1}{1+u^2}\right)$
are negative. Further, as $1-\frac{\alpha}{2}-\frac{1-\alpha}{2}>0,$ we get
\begin{align}\label{exp174}
{}_2F_1\left(\frac{\alpha}{2},\frac{1-\alpha}{2};1;\frac{1}{1+u^2}\right)&=1+\sum_{l=1}^\infty
\dfrac{(\frac{\alpha}{2})_l(\frac{1-\alpha}{2})_l}{(1)_l\,l!}\frac{1}{(1+u^2)^l} \nonumber \\
&\geq1+\sum_{l=1}^\infty\dfrac{(\frac{\alpha}{2})_l(\frac{1-\alpha}{2})_l}{(1)_l\,l!}.
\end{align}
From Lemma \ref{lemma170}, using (\ref{exp171} - \ref{exp174}), we have
\begin{align}\label{exp176}
\int_0^\infty e^{-ut}t^{1-\beta} J_0(t)dt&=\int_0^\infty e^{-ut}t^{\alpha-1} J_0(t)dt \nonumber \\
&=\dfrac{\Gamma(\alpha)}{(1+u^2)^{\alpha/2}}\, {}_2F_1
\left(\frac{\alpha}{2},\frac{1-\alpha}{2};1;\frac{1}{1+u^2}\right) \nonumber \\
&\geq \frac{\Gamma(2)}{2}\frac{\Gamma(\frac{1}{2})}{\Gamma(1-\frac{\alpha}{2})\Gamma(\frac{\alpha+1}{2})}>0,
\end{align}
where the last inequality true as $1\leq (1+u^2)^{\alpha/2}\leq 2^{\alpha/2}\leq 2$ for $1<\alpha<\frac{4}{3}.$

\smallskip

\noindent\textbf{For $n\geq 3$ :} Since $\frac{n}{3}<\beta<\frac{n}{2},$ all the terms in the series
expansion of ${}_2F_1\left(\frac{n-\beta}{2},\frac{\beta-1}{2};\frac{n}{2};\frac{1}{1+u^2}\right)$
are positive. Hence ${}_2F_1\left(\frac{n-\beta}{2},\frac{\beta-1}{2};\frac{n}{2};\frac{1}{1+u^2}\right)\geq 1.$
Thus from Lemma \ref{lemma170}, we have
\begin{align}\label{exp177}
\int_0^\infty e^{-ut}t^{\frac{n}{2}-\beta} J_{\frac{n}{2}-1}(t)dt
&=\dfrac{(\frac{1}{2})^{\frac{n}{2}-1} \Gamma(n-\beta)}{(1+u^2)^{\frac{n-\beta}{2}}
\Gamma(\frac{n}{2})}\,{}_2F_1\left(\frac{n-\beta}{2},\frac{\beta-1}{2};\frac{n}{2};\frac{1}{1+u^2}\right) \nonumber \\
&\geq \dfrac{(\frac{1}{2})^{\frac{n}{2}-1} \Gamma(n-\beta)}{2^{\frac{n-\beta}{2}}\Gamma(\frac{n}{2})} >0.
\end{align}

\smallskip

Consider arbitrary $n\geq 2.$ Since for $x>0,$ $\Gamma(x)$ is continuous, the lower bounds in (\ref{exp176})
and (\ref{exp177}) are independent of $\alpha,\beta.$ Hence for $0<u<1$ and $\frac{n}{3}<\beta<\frac{n}{2},$
there exists $a_0,C>0,$ independent of $u,\beta,$ such that for all $a\geq a_0,$
\begin{align*}
\int_0^a e^{-ut}t^{\frac{n}{2}-\beta} J_{\frac{n}{2}-1}(t)dt \geq C.
\end{align*}
Therefore, $\lim_{u\rightarrow 0^+}\int_0^a e^{-ut}t^{\frac{n}{2}-\beta} J_{\frac{n}{2}-1}(t)dt \geq C$ for
all $a\geq a_0.$ Thus $$\int_0^a t^{\frac{n}{2}-\beta} J_{\frac{n}{2}-1}(t) dt \geq C$$ for all $a\geq a_0$
and $\frac{n}{3}<\beta<\frac{n}{2}.$
\end{proof}

Now we prove the main results of the section.
\begin{theorem}
Consider the square integrable and compactly supported function
$f_\beta(z,e^{i\varphi})=\chi_{B_1(0)}(z)\frac{1}{|z|^\beta},$ where
$\beta <1,$ on $M(2).$ Then  for $1<q<2,$ there exist
$\beta\in(\frac{2}{3},1)$ such that $\int_0^\infty \|\hat{f_\beta}(a)\|_{S_q}^q a da=\infty.$
\end{theorem}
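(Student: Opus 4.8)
The plan is to compute $\hat f_\beta(a)=\pi_a(f_\beta)$ explicitly and to exploit that it is a rank-one operator on $L^2(S^1)$, which makes its $S_q$-norm independent of $q$. Since $f_\beta(z,e^{i\varphi})=\chi_{B_1(0)}(z)|z|^{-\beta}$ does not depend on the rotation variable, integrating $\pi_a(z,e^{i\varphi})g(\theta)=e^{i\operatorname{Re}(ae^{i\theta}z)}g(\theta-\varphi)$ against $f_\beta$ over $\varphi$ replaces $g(\theta-\varphi)$ by $\int_{S^1}g$, a constant. Thus
\[
\hat f_\beta(a)g=\Big(\int_{S^1}g\Big)\,H_a,\qquad H_a(\theta)=\int_{B_1(0)}|z|^{-\beta}\,e^{i\operatorname{Re}(ae^{i\theta}z)}\,dz .
\]
Writing $z=re^{i\alpha}$ and using the $n=2$ instance of \eqref{exp175}, namely $\int_{S^1}e^{iar\cos\alpha}\,d\alpha=2\pi J_0(ar)$, the $\alpha$-integration removes all $\theta$-dependence, so $H_a$ is the constant $2\pi\int_0^1 r^{1-\beta}J_0(ar)\,dr$. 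Hence $\hat f_\beta(a)$ is rank one and, writing $\mathbf 1$ for the constant function on $S^1$, it equals $H_a$ times the projection onto $\mathbb C\mathbf 1$ up to the normalization of Haar measure; its only singular value is $c_0|H_a|$ with $c_0>0$ fixed, so $\|\hat f_\beta(a)\|_{S_q}=c_0|H_a|$ for every $q$.

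Next I would rescale. Substituting $t=ar$ gives $H_a=2\pi a^{\beta-2}\int_0^a t^{1-\beta}J_0(t)\,dt$, and since $\tfrac n2-\beta=1-\beta$ and $J_{\frac n2-1}=J_0$ when $n=2$, Lemma \ref{lemma171} applies for every $\beta\in(\tfrac23,1)$: there are $C,a_0>0$ with $\int_0^a t^{1-\beta}J_0(t)\,dt\ge C$ for all $a\ge a_0$. Therefore $|H_a|\ge 2\pi C\,a^{\beta-2}$ on $[a_0,\infty)$ and
\[
\int_0^\infty\|\hat f_\beta(a)\|_{S_q}^q\,a\,da\ \ge\ (2\pi c_0 C)^q\int_{a_0}^\infty a^{q(\beta-2)+1}\,da ,
\]
which is infinite exactly when $q(\beta-2)+1\ge-1$, i.e. when $\beta\ge 2-\tfrac2q$.

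To finish, fix $q\in(1,2)$; then $2-\tfrac2q\in(0,1)$, so the interval $\big(\max\{\tfrac23,\,2-\tfrac2q\},\,1\big)$ is nonempty, and one picks $\beta$ in it. The bound $\beta<1$ makes $f_\beta$ square integrable, since $\int_{B_1(0)}|z|^{-2\beta}\,dz=2\pi\int_0^1 r^{1-2\beta}\,dr<\infty$; $f_\beta$ is plainly compactly supported and $\int_{M(2)}f_\beta\,d\nu>0$; the bound $\beta>\tfrac23$ permits the use of Lemma \ref{lemma171}; and $\beta\ge 2-\tfrac2q$ makes $\int_0^\infty\|\hat f_\beta(a)\|_{S_q}^q\,a\,da=\infty$, as required.

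I expect the crux to be the first step: verifying that $\hat f_\beta(a)$ genuinely collapses to a rank-one operator on $L^2(S^1)$ and that its amplitude is precisely the Bessel-type integral $\int_0^a t^{1-\beta}J_0(t)\,dt$ controlled by Lemma \ref{lemma171}. Once that structure — and with it the fact that $\|\hat f_\beta(a)\|_{S_q}$ does not see $q$ at all — is in hand, the rescaling and the exponent bookkeeping are routine.
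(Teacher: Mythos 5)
Your proposal is correct and follows essentially the same route as the paper: both reduce the problem to the scalar quantity $2\pi\,a^{\beta-2}\int_0^a t^{1-\beta}J_0(t)\,dt$ via the identity \eqref{exp175}, invoke Lemma \ref{lemma171} for the uniform lower bound on $[a_0,\infty)$, and finish with the same exponent count $\beta\in(\max\{\tfrac23,2-\tfrac2q\},1)$. The only difference is cosmetic: the paper bounds $\|\hat f_\beta(a)\|_{S_q}$ from below by the single matrix coefficient $\langle\hat f_\beta(a)e_0,e_0\rangle$ with $e_0\equiv 1$, whereas you observe that $\hat f_\beta(a)$ is actually rank one (since $f_\beta$ is independent of the rotation variable), which turns that lower bound into an exact formula for the $S_q$-norm.
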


\begin{proof}
Let $e_0(\theta)=1,$ then $e_0\in L^2(S^1).$ For $a>0,$
\begin{align*}
\langle \hat{f_\beta}(a)e_0,e_0 \rangle &=\frac{1}{2\pi}\int_{-\pi}^\pi\hat{f_\beta}(a)
e_0(\theta)\overline{e_0(\theta)}d\theta \\
&=\frac{1}{2\pi}\int_{-\pi}^\pi\int_{-\pi}^\pi\int_{\mathbb{C}}\chi_{B_1(0)}(z)\frac{1}{|z|^\beta}
e^{i\text{Re}(ae^{i\theta}z)}e_0(\theta-\varphi)dz d\varphi d\theta \\
&=\int_{-\pi}^\pi\int_{0}^1\frac{1}{r^\beta}\left(\int_{-\pi}^\pi
e^{iar\text{Re}(e^{i\theta}e^{i\psi})}d\theta\right)rdr d\psi.
\end{align*}
In view  of (\ref{exp175}), we can write
\begin{align}\label{exp173}
\langle\hat{f_\beta}(a)e_0,e_0 \rangle=2\pi c_2 \int_{0}^1\frac{1}{r^{\beta-1}}J_0(ar)dr
=2\pi c_2 \, a^{\beta-2}\int_{0}^a\frac{1}{r^{\beta-1}}J_0(r)dr.
\end{align}
Hence by Lemma \ref{lemma171}, if $\beta \in (\frac{2}{3},1),$ then there exists $C>0$ and $a_0>0$ such
that $\langle\hat{f_\beta}(a)e_0,e_0 \rangle\geq 2\pi c_2 \, C a^{\beta-2}$ for all $a\geq a_0.$ Thus for $1<q<2,$
\begin{align*}
\int_0^\infty \|\hat{f_\beta}(a)\|_{S_q}^q ada &\geq \int_0^\infty \|\hat{f_\beta}(a)\|_{S_\infty}^q ada
\geq \int_0^\infty |\langle\hat{f_\beta}(a)e_0,e_0 \rangle|^q ada \\
&\geq \int_{a_0}^\infty |\langle\hat{f_\beta}(a)e_0,e_0 \rangle|^q ada \geq \tilde{C}\int_{a_0}^\infty a^{(\beta-2)q} ada.
\end{align*}
The integral $\int_{a_0}^\infty a^{(\beta-2)q+1}\,da$ is finite if and only if $(\beta-2)q+1<-1,$
that is, $\beta<2-\frac{2}{q}.$ Since $q<2,$ we have $2-\frac{2}{q}<1.$ If we choose
$\beta\in (\,\max\{\frac{2}{3},2-\frac{2}{q}\},1),$ the corresponding function
$f_\beta$ will be the required function.
\end{proof}

\begin{theorem}
Let $n\geq 3.$ Consider the square integrable and compactly supported function
$f_\beta(x,k)=\chi_{B_1(0)}(x)\frac{1}{|x|^\beta},$ where $\beta <\frac{n}{2},$
on $M(n).$ Then  for $1<q<2,$ there exists $\beta\in(\frac{n}{3},\frac{n}{2})$ such
that $\int_0^\infty \sum_{\sigma\in\hat{M}}d_\sigma\|\hat f(a,\sigma)\|_{S_q}^q a^{n-1}da=\infty.$
\end{theorem}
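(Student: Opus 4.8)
The plan is to follow the $M(2)$ argument verbatim in spirit, the only new feature being the induced, matrix-valued nature of the representations $\pi_{a,\sigma}$. First I would keep only the trivial representation $\sigma_0\in\hat M$: here $d_{\sigma_0}=1$, and the covariance relation $\varphi(uk)=\sigma_0(u)\varphi(k)=\varphi(k)$, $u\in M$, means that the representation space is simply $L^2(M\backslash K)$. Since $K=SO(n)$ acts transitively on $S^{n-1}$ with stabiliser $M$ of $e_1$, the map $s\mapsto s^{-1}\cdot e_1$ identifies $M\backslash K$ with $S^{n-1}$ and pushes normalised Haar measure on $K$ forward to normalised surface measure on $S^{n-1}$. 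Testing $\widehat{f_\beta}(a,\sigma_0)$ against the constant function $\varphi_0\equiv 1$, which is a unit vector in this space, the matrix coefficient of $\pi_{a,\sigma_0}$ collapses to a spherical integral,
\[
\langle\pi_{a,\sigma_0}(x,k)\varphi_0,\varphi_0\rangle
=\int_K e^{ia\langle s^{-1}\cdot e_1,\,x\rangle}\,ds
=\frac{1}{|S^{n-1}|}\int_{S^{n-1}}e^{ia\langle\omega,\,x\rangle}\,d\omega,
\]
which by (\ref{exp175}) equals a positive constant times $(a|x|)^{1-\frac n2}J_{\frac n2-1}(a|x|)$; in particular it is real and depends on $x$ only through $|x|$.

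Integrating this against $f_\beta$ over $M(n)$, the $k$-integral contributes a harmless constant and passing to polar coordinates in $\mathbb R^n$ reduces the diagonal entry to a one-dimensional integral,
\begin{align*}
\langle\widehat{f_\beta}(a,\sigma_0)\varphi_0,\varphi_0\rangle
&= c\, a^{1-\frac n2}\int_0^1 r^{\frac n2-\beta}J_{\frac n2-1}(ar)\,dr \\
&= c\, a^{\beta-n}\int_0^a t^{\frac n2-\beta}J_{\frac n2-1}(t)\,dt,
\end{align*}
after the substitution $t=ar$ --- exactly the shape that occurred in the $M(2)$ case, with $J_0$ replaced by $J_{\frac n2-1}$. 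Now, provided $\beta\in(\frac n3,\frac n2)$, Lemma \ref{lemma171} supplies constants $C,a_0>0$ with $\int_0^a t^{\frac n2-\beta}J_{\frac n2-1}(t)\,dt\geq C$ for all $a\geq a_0$, whence $\langle\widehat{f_\beta}(a,\sigma_0)\varphi_0,\varphi_0\rangle\geq cC\, a^{\beta-n}$ for $a\geq a_0$.

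To conclude, I would discard every $\sigma\neq\sigma_0$ from the Plancherel sum (all terms are nonnegative) and use $\|T\|_{S_q}\geq\|T\|_{S_\infty}\geq|\langle T\varphi_0,\varphi_0\rangle|$ for the unit vector $\varphi_0$ to get, for $a\geq a_0$,
\[
\sum_{\sigma\in\hat M}d_\sigma\|\widehat{f_\beta}(a,\sigma)\|_{S_q}^q\,a^{n-1}
\ \geq\ \bigl|\langle\widehat{f_\beta}(a,\sigma_0)\varphi_0,\varphi_0\rangle\bigr|^q\,a^{n-1}
\ \geq\ (cC)^q\, a^{(\beta-n)q+n-1}.
\]
Hence the integral in the statement dominates $(cC)^q\int_{a_0}^\infty a^{(\beta-n)q+n-1}\,da$, which diverges exactly when $(\beta-n)q+n-1\geq-1$, i.e. when $\beta\geq n-\frac nq$. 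Since $q<2$ we have $0<n-\frac nq<\frac n2$, so the interval $\bigl(\max\{\frac n3,\,n-\frac nq\},\,\frac n2\bigr)$ is nonempty; for any $\beta$ in it the function $f_\beta$ is square integrable (as $\beta<\frac n2$), compactly supported and has $\int_{M(n)}f_\beta\neq 0$, and the displayed integral is infinite, which is the claim. (Together with Proposition \ref{prop155} and unimodularity of $M(n)$, this then shows $W_\varsigma$ admits no bounded extension for symbols in the corresponding $L^p$ with $p=q'\in(2,\infty)$.) The only genuinely delicate step is the first one --- correctly recognising the matrix coefficient of the induced representation $\pi_{a,\sigma_0}$ as the Euclidean Fourier transform of the radial profile $|x|^{-\beta}$ averaged over the sphere --- but once $\sigma_0$ is the trivial representation there is no metaplectic-type subtlety left, and the Bessel asymptotics needed afterwards have already been packaged in Lemma \ref{lemma171}.
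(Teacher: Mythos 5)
Your proposal is correct and follows essentially the same route as the paper: restrict to the trivial representation $\sigma_0$ with the constant test vector $\varphi_0$, identify the diagonal matrix coefficient with the spherical average (\ref{exp175}) to obtain $c\,a^{\beta-n}\int_0^a t^{\frac n2-\beta}J_{\frac n2-1}(t)\,dt$, invoke Lemma \ref{lemma171}, and choose $\beta\in(\max\{\frac n3,\,n-\frac nq\},\frac n2)$. The only cosmetic difference is your explicit identification of $M\backslash K$ with $S^{n-1}$, which the paper carries out implicitly via $S^{n-1}=\{s^{-1}\cdot e_1:s\in K\}$.
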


\begin{proof}
Let $\sigma_o\in\hat{M},$ be the trivial representation. Consider the function $\varphi_0$ on $K$
defined by $\varphi_0(k)=1$ for all $k\in K.$ Since $d_{\sigma_o}=1,$ we have
$\varphi_0\in L^2(K,\mathbb C^{d_{\sigma_o}\times d_{\sigma_o}}).$ Now for $a>0,$
\begin{align*}
\langle \hat f_\beta(a,\sigma_o)\varphi_0,\varphi_0 \rangle
&=\int_K\int_K\int_{\mathbb R^n}f_\beta(x,k)e^{ia\left\langle s^{-1}\cdot e_1,\, x\right\rangle}
\varphi_0(sk)\overline{\varphi_0(s)}\,dxdkds \\
&=\int_K\int_{\mathbb R^n}\chi_{B_1(0)}(x)\frac{1}{|x|^\beta}e^{ia\left\langle s^{-1}\cdot e_1,\, x\right\rangle}\,dxds.
\end{align*}
Again $S^{n-1}=\{s^{-1}\cdot e_1:s\in K\}.$ Therefore, using (\ref{exp175}) we get
\begin{align*}
\langle \hat f_\beta(a,\sigma_o)\varphi_0,\varphi_0 \rangle
&=\int_{S^{n-1}}\int_{\mathbb R^n}\chi_{B_1(0)}(x)\frac{1}{|x|^\beta}e^{ia\left\langle \omega,\, x\right\rangle}\,dxd\omega \\
&=c_n \int_{\mathbb R^n}\chi_{B_1(0)}(x)\frac{1}{|x|^\beta} \, (a|x|)^{1-\frac{n}{2}} J_{\frac{n}{2}-1}(a|x|) dx \\
&=c_n\int_0^1\int_{S^{n-1}}\frac{1}{r^\beta} \, (ar)^{1-\frac{n}{2}} J_{\frac{n}{2}-1}(ar)r^{n-1} dr d\omega \\
&=c_n \, a^{\beta-n}\int_0^a r^{\frac{n}{2}-\beta} J_{\frac{n}{2}-1}(r) dr.
\end{align*}
Hence by Lemma \ref{lemma171}, if $\beta \in (\frac{n}{3},\frac{n}{2}),$ then there exists $C>0$ and $a_0>0$
such that $\langle \hat f_\beta(a,\sigma_o)\varphi_0,\varphi_0 \rangle\geq c_n \, C a^{\beta-n}$ for all
$a\geq a_0.$ Thus for $1<q<2,$
{\footnotesize
\begin{align*}
\int_0^\infty \sum_{\sigma\in\hat{M}}d_\sigma\|\hat f(a,\sigma)\|_{S_q}^q a^{n-1}da
&\geq \int_0^\infty \|\hat f(a,\sigma_o)\|_{S_q}^q a^{n-1}da
\geq \int_0^\infty \left|\langle \hat f_\beta(a,\sigma_o)\varphi_0,\varphi_0 \rangle\right|^qa^{n-1}da \\
&\geq \int_{a_0}^\infty \left|\langle \hat f_\beta(a,\sigma_o)\varphi_0,\varphi_0 \rangle\right|^qa^{n-1}da
\geq \tilde{C} \int_{a_0}^\infty a^{(\beta-n)q+n-1}da.
\end{align*}}
The integral $\int_{a_0}^\infty a^{(\beta-n)q+n-1}da$ is infinite if $(\beta-n)q+n-1\geq -1,$
that is, $\beta\geq n-\frac{n}{q}.$  Since $q<2,$ we have $n-\frac{n}{q}<\frac{n}{2}.$ If we
choose $\beta\in (\,\max\{\frac{n}{3},n-\frac{n}{q}\},\frac{n}{2}),$ the corresponding function
$f_\beta$ will be the required function.
\end{proof}

\section{Heisenberg motion group}
The Heisenberg group $\mathbb H^n=\mathbb C^n\times\mathbb R$ is a step two
nilpotent Lie group having center $\mathbb R$ that equipped with the group law
\[(z,t)\cdot(w,s)=\left(z+w,t+s+\frac{1}{2}\text{Im}(z\cdot\bar w)\right).\]

\smallskip

By the Stone-von Neumann theorem, the infinite dimensional irreducible unitary
representations of $\mathbb H^n$ can be parameterized by $\mathbb R^\ast=\mathbb
R\smallsetminus\{0\}.$ That is, each $\lambda\in\mathbb R^\ast$ defines a
Schr\"{o}dinger representation $\pi_\lambda$ of $\mathbb H^n$ via
\[\pi_\lambda(z,t)\varphi(\xi)=e^{i\lambda t}e^{i\lambda(x\cdot\xi+\frac{1}{2}x\cdot y)}\varphi(\xi+y),\]
where $z=x+iy$ and $\varphi\in L^2(\mathbb{R}^n).$

\smallskip

Having chosen sublaplacian $\mathcal L$ of the Heisenberg group $\mathbb H^n$ and its
geometry, there is a larger group of isometries that commute with $\mathcal L$, known as
Heisenberg motion group. The Heisenberg motion group $G$ is the semidirect product of
$\mathbb H^n$ with the unitary group $K=U(n).$ Since $K$ defines a group
of automorphisms on $\mathbb H^n,$ via $k\cdot(z,t)=(kz,t),$ the group law on
$G$ can be expressed as
\[(z,t,k_1)\cdot(w,s,k_2)=\left(z+k_1w, t+s-\frac{1}{2}\text{Im} (k_1w\cdot\bar z),k_1k_2\right).\]
Since a right $K$-invariant function on $G$ can be thought as a function on $\mathbb H^n,$
the Haar measure on $G$ is given by $dg=dzdtdk,$ where $dzdt$ and $dk$ are the normalized
Haar measure on $\mathbb H^n$ and $K$ respectively.

\smallskip

For $k\in K$ define another set of representations of the Heisenberg group
$\mathbb H^n$ by $\pi_{\lambda,k}(z,t)=\pi_\lambda(kz,t).$ Since $\pi_{\lambda,k}$
agrees with $\pi_\lambda$ on the center of $\mathbb H^n,$  it follows by
the Stone-Von Neumann theorem for the Schr\"{o}dinger representation that
$\pi_{\lambda,k}$ is equivalent to $\pi_\lambda.$ Hence there exists an
intertwining operator $\mu_\lambda(k)$ satisfying
\[\pi_\lambda(kz,t)=\mu_\lambda(k)\pi_\lambda(z,t)\mu_\lambda(k)^\ast.\]
Then $\mu_\lambda$ can be thought of as a unitary
representation of $K$ on $L^2(\mathbb R^n),$ called
metaplectic representation.
Let $(\sigma,\mathcal H_\sigma)$ be an irreducible unitary representation of $K$
and $\mathcal H_\sigma=\text{span}\{e_j^\sigma:1\leq j\leq d_\sigma\}.$ For
$k\in K,$ the matrix coefficients of the representation $\sigma\in\hat K$ are
given by \[\varphi_{ij}^\sigma(k)=\langle\sigma(k)e_j^\sigma, e_i^\sigma\rangle,\]
where $i, j=1,\ldots,  d_\sigma.$

\smallskip

Let $\phi_\alpha^\lambda(x)=|\lambda|^{\frac{n}{4}}\phi_\alpha(\sqrt{|\lambda|}x);~\alpha\in\mathbb Z_+^n,$
where $\phi_\alpha$'s are the Hermite functions on $\mathbb R^n.$ Since for each $\lambda\in\mathbb R^\ast,$
the set $\{\phi_\alpha^\lambda : \alpha\in\mathbb Z_+^n \}$ forms an orthonormal basis for $L^2(\mathbb R^n),$
letting $P_m^\lambda=\text{span}\{\phi_\alpha^\lambda:~ |\alpha|=m\},$ $\mu_\lambda$ becomes an irreducible
unitary representation of $K$ on $P_m^\lambda.$ Hence, the action of $\mu_\lambda$ can be realized on
$P_m^\lambda$ by
\begin{equation}\label{exp50}
\mu_\lambda(k)\phi_\gamma^\lambda=\sum_{|\alpha|=|\gamma|}\eta_{\gamma\alpha}^\lambda(k)\phi_\alpha^\lambda,
\end{equation}
where $\eta_{\alpha\gamma}^\lambda$'s are the matrix coefficients of $\mu_\lambda(k).$
Define a bilinear form $\phi_{\alpha}^\lambda\otimes e_j^\sigma$ on
$L^2(\mathbb R^n)\times\mathcal H_\sigma$ by
$\phi_{\alpha}^\lambda\otimes e_j^\sigma=\phi_{\alpha}^\lambda e_j^\sigma.$ Then
$\{\phi_{\alpha}^\lambda\otimes e_j^\sigma: \alpha\in\mathbb Z_+^n,1\leq j\leq d_\sigma\}$
forms an orthonormal basis for $L^2(\mathbb R^n)\otimes\mathcal{H}_\sigma.$
Denote  $\mathcal H_\sigma^2=L^2(\mathbb R^n)\otimes\mathcal{H}_\sigma.$

\smallskip

Define a representation $\rho_\sigma^\lambda$
of $G$ on the space $\mathcal H_\sigma^2$ by
\[\rho_\sigma^\lambda(z,t,k)=\pi_\lambda(z,t)\mu_\lambda(k)\otimes\sigma(k).\]
In the article \cite{S}, it is shown that $\rho_\sigma^\lambda$ are all possible
irreducible unitary representations of $G$ that participate in the Plancherel formula.
Thus, in view of the above discussion, we shall denote the partial dual of the group $G$ by
$ G'\cong\mathbb R^\ast\times\hat K.$ The Fourier transform of $f\in L^1(G)$ defined by
\[\hat f(\lambda,\sigma) =\int_K\int_\mathbb{R}\int_{\mathbb{C}^n} f(z,t,k)\rho_\sigma^\lambda(z,t,k)dzdtdk,\]
is a bounded linear operator on $\mathcal H_\sigma^2.$ As the Plancherel formula
\[\int_K\int_{\mathbb{H}^n} |f(z,t,k)|^2 dzdtdk=(2\pi)^{-n}\int_{\mathbb{R}\setminus\{0\}}
\sum_{\sigma\in\hat K}d_\sigma\|\hat f(\lambda,\sigma)\|^2_{S_2}|\lambda|^nd\lambda\]
holds for $f\in L^2(G),$ it follows that $\hat f(\lambda,\sigma)$ is a Hilbert-Schmidt
operator on $\mathcal H_\sigma^2.$ For detailed Fourier analysis on the Heisenberg motion group, see \cite{BJR, GS2, S}.

\smallskip

First, we recall the example of the previously mentioned required function for the Heisenberg group
and show that the example can be extended to the Heisenberg motion group.

Let $A=\{(x,y,t):|x_l|\leq 1,|y_l|\leq 1,|t|\leq 1; l=1,\ldots,n\}$ be a compact subset of
$\mathbb{R}^{2n}\times\mathbb{R}$ and
\begin{align}\label{exp180}
g_\xi(z,t)=g_\xi(x,y,t)=|t|^\xi\prod\limits_{j=1}^n|x_j|^\xi\prod\limits_{j=1}^n|y_j|^\xi\chi_{A}(z,t),
\end{align}
where $\xi>-\frac{1}{2}.$ Then the following result holds.
\begin{theorem}\cite{PZ}\label{th180}
For $1<q<2,$ there exists $\xi>-\frac{1}{2},$ such that
$$\int_{\mathbb{R}\setminus\{0\}}|\langle \hat{g_\xi}(\lambda)\phi_0^\lambda,\phi_0^\lambda \rangle|^q|\lambda|^n d\lambda$$
is infinite.
\end{theorem}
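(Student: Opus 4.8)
The plan is to evaluate the matrix coefficient $\langle\hat{g_\xi}(\lambda)\phi_0^\lambda,\phi_0^\lambda\rangle$ in closed form and then read off its decay rate in $|\lambda|$. Writing $\hat{g_\xi}(\lambda)=\int_{\mathbb H^n}g_\xi(z,t)\pi_\lambda(z,t)\,dz\,dt$, one has
\begin{align*}
\langle\hat{g_\xi}(\lambda)\phi_0^\lambda,\phi_0^\lambda\rangle=\int_{\mathbb H^n}g_\xi(z,t)\,\langle\pi_\lambda(z,t)\phi_0^\lambda,\phi_0^\lambda\rangle\,dz\,dt.
\end{align*}
Using $\phi_0^\lambda(\xi)=(|\lambda|/\pi)^{n/4}e^{-|\lambda||\xi|^2/2}$ and completing the square in the Schr\"odinger model gives the standard identity $\langle\pi_\lambda(z,t)\phi_0^\lambda,\phi_0^\lambda\rangle=e^{i\lambda t}e^{-\frac{|\lambda|}{4}|z|^2}$. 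Since $g_\xi$ has the product form (\ref{exp180}) and the box $A$ is a product of intervals, the integral splits completely:
\begin{align*}
\langle\hat{g_\xi}(\lambda)\phi_0^\lambda,\phi_0^\lambda\rangle=\Big(\int_{-1}^1|t|^\xi e^{i\lambda t}\,dt\Big)\prod_{j=1}^n\Big(\int_{-1}^1|x_j|^\xi e^{-\frac{|\lambda|}{4}x_j^2}\,dx_j\Big)\prod_{j=1}^n\Big(\int_{-1}^1|y_j|^\xi e^{-\frac{|\lambda|}{4}y_j^2}\,dy_j\Big).
\end{align*}
Thus everything reduces to the large-$|\lambda|$ behaviour of one oscillatory and one Gaussian one-dimensional integral.

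Next I would restrict to $\xi\in(-\tfrac12,0)$ and estimate each factor. Scaling $s=|\lambda|t$ shows $\int_{-1}^1|t|^\xi e^{i\lambda t}\,dt=2|\lambda|^{-\xi-1}\int_0^{|\lambda|}s^\xi\cos s\,ds$; since $\xi\in(-1,0)$ the conditionally convergent integral $\int_0^\infty s^\xi\cos s\,ds=\Gamma(\xi+1)\cos\!\big(\tfrac{\pi(\xi+1)}{2}\big)$ is a strictly positive constant, so this factor is of size $|\lambda|^{-\xi-1}$ for $|\lambda|$ large. Scaling $s=\tfrac12\sqrt{|\lambda|}\,x_j$ shows $\int_{-1}^1|x_j|^\xi e^{-\frac{|\lambda|}{4}x_j^2}\,dx_j=2^{\xi+2}|\lambda|^{-(\xi+1)/2}\int_0^{\sqrt{|\lambda|}/2}s^\xi e^{-s^2}\,ds$, and the last integral converges to $\tfrac12\Gamma(\tfrac{\xi+1}{2})>0$, so this factor is of size $|\lambda|^{-(\xi+1)/2}$. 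Multiplying the $2n+1$ factors yields constants $c>0$, $\lambda_0>0$ with
\begin{align*}
\big|\langle\hat{g_\xi}(\lambda)\phi_0^\lambda,\phi_0^\lambda\rangle\big|\ \geq\ c\,|\lambda|^{-(n+1)(\xi+1)}\qquad\text{whenever }|\lambda|\geq\lambda_0.
\end{align*}

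Finally, with this lower bound,
\begin{align*}
\int_{\mathbb R\setminus\{0\}}\big|\langle\hat{g_\xi}(\lambda)\phi_0^\lambda,\phi_0^\lambda\rangle\big|^q|\lambda|^n\,d\lambda\ \geq\ c^q\int_{|\lambda|\geq\lambda_0}|\lambda|^{\,n-q(n+1)(\xi+1)}\,d\lambda,
\end{align*}
and the right side is infinite exactly when $n-q(n+1)(\xi+1)\geq-1$, i.e. $q(\xi+1)\leq1$, i.e. $\xi\leq\tfrac1q-1$. Because $1<q<2$ forces $\tfrac1q-1\in(-\tfrac12,0)$, the interval $(-\tfrac12,\tfrac1q-1]$ is nonempty, and any $\xi$ in it is automatically $>-\tfrac12$ (so $g_\xi\in L^2$) and $<0$ (validating the previous step); such a $\xi$ gives the required function. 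I expect the oscillatory $t$-factor to be the only delicate point: one must show the truncated integral $\int_0^{|\lambda|}s^\xi\cos s\,ds$ really tends to a \emph{nonzero} limit (which is why $\xi<0$ is needed, so that $\cos(\tfrac{\pi(\xi+1)}{2})\neq0$) and that the leftover pieces — in particular the boundary term of order $|\lambda|^{-1}$ produced by the cutoff at $t=1$ — are of smaller order than $|\lambda|^{-\xi-1}$, so that one genuinely obtains a lower bound, not merely an upper bound, on $|\langle\hat{g_\xi}(\lambda)\phi_0^\lambda,\phi_0^\lambda\rangle|$.
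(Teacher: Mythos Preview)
Your argument is correct. Note, however, that the paper does not give its own proof of this statement: Theorem~\ref{th180} is quoted verbatim from \cite{PZ} and is only used as a black box in the proof of the subsequent result on the Heisenberg motion group, so there is no in-paper argument to compare against. Your computation --- reducing to the explicit coefficient $\langle\pi_\lambda(z,t)\phi_0^\lambda,\phi_0^\lambda\rangle=e^{i\lambda t}e^{-|\lambda||z|^2/4}$, factorising into $2n+1$ one-dimensional integrals, scaling each to extract a power of $|\lambda|$, and then choosing $\xi\in(-\tfrac12,\tfrac1q-1]$ --- is the natural direct approach and is essentially what is carried out in \cite{PZ}. The one point you flag is indeed the only delicate one, and it goes through: for $\xi\in(-1,0)$ a single integration by parts shows $\int_{|\lambda|}^\infty s^\xi\cos s\,ds=O(|\lambda|^\xi)=o(1)$, so $\int_0^{|\lambda|}s^\xi\cos s\,ds$ converges to the strictly positive constant $\Gamma(\xi+1)\cos\!\big(\tfrac{\pi(\xi+1)}{2}\big)$ and the asserted lower bound on $|\langle\hat g_\xi(\lambda)\phi_0^\lambda,\phi_0^\lambda\rangle|$ is genuine.
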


The following proposition gives the required function for the Heisenberg motion group.

\begin{theorem}
Consider the function $f_\xi(z,t,k)=g_\xi(z,t)$ on $G,$ where $g_\xi$ is defined in (\ref{exp180}). Then
$f_\xi$ is square integrable and compactly supported. Further for $1<q<2,$ there exists $\xi>-\frac{1}{2},$
such that $\int_{\mathbb R\setminus\{0\}}\sum_{\sigma\in\hat K}d_\sigma
\|\hat f_\xi(\lambda,\sigma)\|^q_{S_q}|\lambda|^n d\lambda$
is infinite
\end{theorem}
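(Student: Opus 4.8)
The plan is to reduce the assertion for $G$ to the Heisenberg-group computation already recorded in Theorem \ref{th180}, exploiting two facts: $f_\xi$ is constant in the $K$-variable, and on the ground state $\phi_0^\lambda$ the metaplectic representation $\mu_\lambda$ acts by a scalar. The elementary claims are immediate: $f_\xi$ is supported on $A\times K$, which is compact since $K=U(n)$ is compact, and
\[\|f_\xi\|_{L^2(G)}^2=\int_K\int_{\mathbb H^n}|g_\xi(z,t)|^2\,dz\,dt\,dk=\|g_\xi\|_{L^2(\mathbb H^n)}^2,\]
which is finite exactly because $\xi>-\tfrac12$ (each one-dimensional factor $\int_{-1}^{1}|s|^{2\xi}\,ds$ converges iff $2\xi>-1$); likewise $\int_G f_\xi\,dg=\|g_\xi\|_{L^1(\mathbb H^n)}>0$, and $f_\xi\in L^1\cap L^2(G)$ so every Fourier transform below is legitimate.

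For the main part, fix $q\in(1,2)$ and take $\xi>-\tfrac12$ as supplied by Theorem \ref{th180}, and fix $\lambda\in\mathbb R^\ast$. Since $P_0^\lambda=\mathbb C\,\phi_0^\lambda$ is one-dimensional, relation (\ref{exp50}) with $\gamma=0$ gives $\mu_\lambda(k)\phi_0^\lambda=\eta_{00}^\lambda(k)\phi_0^\lambda$, where $\eta_{00}^\lambda$ is a unitary character of $K$; let $\sigma_\lambda\in\hat K$ be the one-dimensional representation $k\mapsto\overline{\eta_{00}^\lambda(k)}$, acting on $\mathbb C=\mathcal H_{\sigma_\lambda}$ with unit vector $1$. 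Substituting $\rho_\sigma^\lambda(z,t,k)=\pi_\lambda(z,t)\mu_\lambda(k)\otimes\sigma(k)$ and $f_\xi(z,t,k)=g_\xi(z,t)$ into the definition of $\hat f_\xi(\lambda,\sigma_\lambda)$, pairing against $\phi_0^\lambda\otimes 1$, and using Fubini to separate the $(z,t)$- and $k$-integrals (the integrand is a product, bounded because $|\langle\pi_\lambda(z,t)\phi_0^\lambda,\phi_0^\lambda\rangle|\le1$, $|\eta_{00}^\lambda|\equiv1$, $K$ compact, $g_\xi\in L^1$), one gets
\begin{align*}
\big\langle\hat f_\xi(\lambda,\sigma_\lambda)(\phi_0^\lambda\otimes 1),\,\phi_0^\lambda\otimes 1\big\rangle
&=\Big(\int_K\overline{\eta_{00}^\lambda(k)}\,\eta_{00}^\lambda(k)\,dk\Big)\int_{\mathbb H^n}g_\xi(z,t)\,\langle\pi_\lambda(z,t)\phi_0^\lambda,\phi_0^\lambda\rangle\,dz\,dt\\
&=\langle\hat{g_\xi}(\lambda)\phi_0^\lambda,\phi_0^\lambda\rangle,
\end{align*}
where the final equality uses $|\eta_{00}^\lambda|\equiv1$ and the normalization of $dk$ (equivalently, Schur orthogonality isolates the single summand $\sigma=\sigma_\lambda$).

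Consequently, since $d_{\sigma_\lambda}=1$ and the operator norm is dominated by any Schatten norm,
\[\sum_{\sigma\in\hat K}d_\sigma\,\|\hat f_\xi(\lambda,\sigma)\|_{S_q}^q\ \ge\ \|\hat f_\xi(\lambda,\sigma_\lambda)\|_{S_\infty}^q\ \ge\ \big|\langle\hat{g_\xi}(\lambda)\phi_0^\lambda,\phi_0^\lambda\rangle\big|^q\]
for every $\lambda\in\mathbb R^\ast$. Integrating against $|\lambda|^n\,d\lambda$ and invoking Theorem \ref{th180} then forces
\[\int_{\mathbb R\setminus\{0\}}\sum_{\sigma\in\hat K}d_\sigma\|\hat f_\xi(\lambda,\sigma)\|_{S_q}^q\,|\lambda|^n\,d\lambda\ \ge\ \int_{\mathbb R\setminus\{0\}}\big|\langle\hat{g_\xi}(\lambda)\phi_0^\lambda,\phi_0^\lambda\rangle\big|^q\,|\lambda|^n\,d\lambda=\infty,\]
which is the claim.

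The one step that genuinely uses the structure theory — the point flagged in the introduction as the metaplectic difficulty — is pinning down the action of $\mu_\lambda$ on $\phi_0^\lambda$: one must check that $\mu_\lambda(k)$ really preserves the line $\mathbb C\phi_0^\lambda$ and acts there by a unitary character $\eta_{00}^\lambda$, and that the resulting $\sigma_\lambda$ is an honest element of $\hat K$ occurring in the Plancherel sum. Since only a pointwise-in-$\lambda$ lower bound is needed, it does no harm that $\sigma_\lambda$ may vary with $\lambda$ (in fact it depends only on $\mathrm{sgn}\,\lambda$); no $\lambda$-uniform choice of $\sigma$ is required, and that is precisely what keeps the reduction to Theorem \ref{th180} clean.
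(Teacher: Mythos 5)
Your proof is correct, but it takes a genuinely different route from the paper's. The paper lower-bounds $\sum_{\sigma}d_\sigma\|\hat f_\xi(\lambda,\sigma)\|_{S_q}^q$ by $\bigl(\sum_{\sigma}d_\sigma\|\hat f_\xi(\lambda,\sigma)\|_{S_2}^2\bigr)^{q/2}$ through a chain of Schatten and $\ell^p$ inequalities, then computes the full Hilbert--Schmidt sum exactly: expanding $\hat f_\xi(\lambda,\sigma)$ in the basis $\phi_\alpha^\lambda\otimes e_j^\sigma$ and applying the Peter--Weyl theorem on $K$ collapses the sum over $\sigma\in\hat K$ into a single $K$-integral of squared matrix coefficients of $\mu_\lambda$, after which only the $\alpha=\beta=0$ term is retained and evaluated by Schur orthogonality for $\mu_\lambda|_{P_0^\lambda}$. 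You instead bypass the global computation entirely: you observe from (\ref{exp50}) that $\mu_\lambda$ acts on the line $P_0^\lambda$ by the unitary character $\eta_{00}^\lambda$, choose the single one-dimensional $K$-type $\sigma_\lambda=\overline{\eta_{00}^\lambda}$ that exactly untwists it, compute one diagonal matrix entry of $\hat f_\xi(\lambda,\sigma_\lambda)$ in closed form, and bound the whole $\sigma$-sum below by that entry via $d_{\sigma_\lambda}=1$ and $\|\cdot\|_{S_q}\geq\|\cdot\|_{S_\infty}$. Both arguments rest on the same structural input (the $\mu_\lambda$-invariance of the vacuum line), and your observation that no $\lambda$-uniform choice of $\sigma$ is needed is exactly right. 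What your version buys is brevity and transparency --- it exhibits the precise $K$-type on which the mass concentrates, which is the ``metaplectic difficulty'' the introduction alludes to; had you paired against the trivial $\sigma$ instead, the $k$-integral $\int_K\eta_{00}^\lambda(k)\,dk$ could vanish, so the twist by $\overline{\eta_{00}^\lambda}$ is essential and you correctly identified it. What the paper's version buys is an exact Peter--Weyl identity for $\sum_\sigma d_\sigma\|\hat f_\xi(\lambda,\sigma)\|_{S_2}^2$ valid for any right-$K$-invariant function, which is more information than the theorem needs but is reusable elsewhere.
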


\begin{proof}
Let $1<q<2$ and $\lambda\in \mathbb{R}\setminus\{0\}.$ Then
\begin{align}\label{exp181}
\sum_{\sigma\in\hat K}d_\sigma \|\hat{f_\xi}(\lambda,\sigma)\|^q_{S_q}
&\geq \sum_{\sigma\in\hat K}\left(d_\sigma^{1/q} \|\hat{f_\xi}(\lambda,\sigma)\|_{S_2}\right)^q
\geq \left(\sum_{\sigma\in\hat K}\left(d_\sigma^{1/q} \|\hat{f_\xi}(\lambda,\sigma)\|_{S_2}\right)^2\right)^{q/2} \nonumber \\
&\geq \left(\sum_{\sigma\in\hat K}d_\sigma^{2/q} \|\hat{f_\xi}(\lambda,\sigma)\|_{S_2}^2\right)^{q/2}
\geq \left(\sum_{\sigma\in\hat K}d_\sigma \|\hat{f_\xi}(\lambda,\sigma)\|_{S_2}^2\right)^{q/2}.
\end{align}
It is already discussed that
$\{\phi_{\alpha}^\lambda\otimes e_j^\sigma: \alpha\in\mathbb Z_+^n,1\leq j\leq d_\sigma\}$
forms an orthonormal basis for $L^2(\mathbb R^n)\otimes\mathcal{H}_\sigma.$ Thus by (\ref{exp50}),
\[\langle\hat{f_\xi}(\lambda,\sigma)(\phi_{\alpha}^\lambda\otimes e_j^\sigma),(\phi_{\beta}^\lambda\otimes e_l^\sigma)\rangle
=\int_{\mathbb{H}^n}\int_K f_\xi(z,t,k)\sum\limits_{|\gamma|=|\alpha|}
\eta_{\alpha\gamma}(k)\Phi_{\gamma\beta}^\lambda(z,t)\phi_{lj}^\sigma (k)dzdtdk,\]
where $\Phi_{\gamma\beta}^\lambda(z,t)=\langle \pi_\lambda(z,t)\phi_\gamma^\lambda,\phi_\beta^\lambda\rangle.$
Therefore, $\|\hat{f_\xi}(\lambda,\sigma)(\phi_{\alpha}^\lambda\otimes e_j^\sigma)\|_2^2$ is equal to
\begin{align*}
&\sum_{\beta\in \mathbb Z_+^n}\sum_{1\leq l\leq d_\sigma}\left|\int_{\mathbb{H}^n}\int_K f_\xi(z,t,k)
\sum\limits_{|\gamma|=|\alpha|} \eta_{\alpha\gamma}(k)\Phi_{\gamma\beta}^\lambda(z,t)\phi_{lj}^\sigma(k)dzdtdk\right|^2 \\
=&\sum_{\beta\in \mathbb Z_+^n}\sum_{1\leq l\leq d_\sigma}\left|\sum\limits_{|\gamma|=|\alpha|}
\langle\hat{g_\xi}(\lambda)\phi_\gamma^\lambda,\phi_\beta^\lambda\rangle\int_K
\eta_{\alpha\gamma}(k)\phi_{lj}^\sigma(k)dk\right|^2.
\end{align*}
That is,
\begin{align*}
\|\hat{f_\xi}(\lambda,\sigma)\|_{S_2}^2=\sum_{\alpha,\beta\in \mathbb Z_+^n}\sum_{1\leq j,l\leq d_\sigma}
\left|\sum\limits_{|\gamma|=|\alpha|}\langle\hat{g_\xi}(\lambda)\phi_\gamma^\lambda,\phi_\beta^\lambda\rangle
\int_K\eta_{\alpha\gamma}(k)\phi_{lj}^\sigma(k)dk\right|^2.
\end{align*}
Hence by the Peter-Weyl theorem (Plancherel) for compact groups, we get
\begin{align*}
\sum_{\sigma\in\hat K}d_\sigma \|\hat{f_\xi}(\lambda,\sigma)\|_{S_2}^2
&=\sum_{\alpha,\beta\in \mathbb Z_+^n}\int_K\left|\sum\limits_{|\gamma|=|\alpha|}
\langle\hat{g_\xi}(\lambda)\phi_\gamma^\lambda,\phi_\beta^\lambda\rangle \eta_{\alpha\gamma}(k)\right|^2 dk \\
&\geq \sum_{\alpha,\beta\in \mathbb Z_+^n\setminus \mathbb{N}^n}\int_K\left|\sum\limits_{|\gamma|=|\alpha|}
\langle\hat{g_\xi}(\lambda)\phi_\gamma^\lambda,\phi_\beta^\lambda\rangle\eta_{\alpha\gamma}(k)\right|^2 dk \\
&=\int_K\left|\langle\hat{g_\xi}(\lambda)\phi_0^\lambda,\phi_0^\lambda\rangle\eta_{00}(k)\right|^2 dk
=|\langle\hat{g_\xi}(\lambda)\phi_0^\lambda,\phi_0^\lambda\rangle|^2.
\end{align*}
Since $\mu_\lambda|_{P_0^\lambda}$ is irreducible, the last equality follows from Schur's orthogonality relation.
Therefore, from (\ref{exp181}), we can conclude that
\begin{align*}
\int_{\mathbb R\setminus\{0\}}\sum_{\sigma\in\hat K}d_\sigma \|\hat{f_\xi}(\lambda,\sigma)\|^q_{S_q}|\lambda|^n d\lambda
&\geq\int_{\mathbb{R}\setminus\{0\}}\left(\sum_{\sigma\in\hat K}
d_\sigma\|\hat{f_\xi}(\lambda,\sigma)\|_{S_2}^2\right)^{q/2}|\lambda|^nd\lambda \\
&\geq\int_{\mathbb R \setminus\{0\}}|\langle\hat{g_\xi}(\lambda)\phi_0^\lambda,\phi_0^\lambda\rangle|^q |\lambda|^n d\lambda.
\end{align*}
Thus Theorem \ref{th180} proves the result.
\end{proof}

\begin{remark}
As compare to motion groups, the result in product space is straightforward. Let $G$ be a second countable
type $I$ locally compact unimodular group and $(\pi,\mathcal{H}_\pi)$ be its representation. Consider
$G_P=\mathbb{R}^n\times G.$ For $f\in L^1(G_P),$ Fourier transform defined by
$\hat{f}(x,\pi)=\int_{\mathbb{R}^n}\int_G f(y,u)e^{-2\pi ix\cdot y}\pi(u)d\nu(u)dy$
is a bounded operator on $\mathcal{H}_\pi.$ If we take $f(x,u)=f_1(x)f_2(u),$ then
\begin{align}\label{exp182}
\int_{\mathbb{R}^n}\int_{\hat{G}}\|\hat{f}(x,\pi)\|_{S_q}^q d\mu(\pi)dy
=\int_{\mathbb{R}^n}|\hat{f}_1(y)|^q dy \int_{\hat{G}}\|\hat{f}_2(\pi)\|_{S_q}^q d\mu(\pi).
\end{align}
It is known that, for $q\in (1,2),$ there exists a positive, square integrable and compactly supported function
$f_1$ on $\mathbb{R}^n$ such that $\int_{\mathbb{R}^n}|\hat{f}_1(y)|^q dy$ is infinite (see \cite{Si}).
If we choose a square integrable and compactly supported function $f_2$ on $G$ such that
$\int_{\hat{G}}\|\hat{f}_2(\pi)\|_{S_q}^q d\mu(\pi)\neq 0,$
then by (\ref{exp182}), $\int_{\mathbb{R}^n}\int_{\hat{G}}\|\hat{f}(x,\pi)\|_{S_q}^q d\mu(\pi)dy$
is infinite. Hence in view of Proposition \ref{prop155}, the Weyl transform $W_\zeta$ on
$G_P$ is not bounded for $\zeta\in L^p(G_P\times\widehat{G_P})$ with $2<p<\infty$ .
\end{remark}

\noindent{\bf Acknowledgements:}
The first author gratefully acknowledges the support provided by IIT Guwahati, Government of India.



\begin{thebibliography}{1000}

\bibitem{AAR} G. E. Andrews, R. Askey and R. Roy, {\em Special functions,}
 Encyclopedia of Mathematics and its Applications, Cambridge University Press, Cambridge, (71) 1999.

\bibitem{BJR} C. Benson, J. Jenkins and G. Ratcliff, {\em Bounded K-spherical functions on Heisenberg groups,}
J. Funct. Anal. 105 (1992), no. 2, 409-443.

\bibitem{CZ} L. Chen and J. Zhao, {\em Weyl transform and generalized spectrogram associated
with quaternion {H}eisenberg group,} Bull. Sci. Math. 136 (2012), no. 2, 127-143.

\bibitem{DM} M. Duflo and C. C. Moore, {\em On the regular representation of a nonunimodular locally compact group,}
J. Funct. Anal. 21 (1976), no. 2, 209-243.

\bibitem{GS2} S. Ghosh and R. K. Srivastava, {\em Benedicks-Amrein-Berthier theorem for the Heisenberg motion group
and quaternion Heisenberg group,} \href{https://arxiv.org/abs/1904.04023}{arXiv:1904.04023}.

\bibitem{KK} K. Kumahara and K. Okamoto, {\em An analogue of the Paley-Wiener theorem for the Euclidean motion group,}
Osaka Math. J. 10 (1973), 77-91.

\bibitem{N} W. Nasserddine, {\em $L^p$-Fourier inversion formula on certain locally compact groups,}
C. R. Math. Acad. Sci. Paris 357 (2019), no. 7, 583-588.

\bibitem{PZ} L. Peng and J. Zhao, {\em Weyl transforms associated with the Heisenberg group,}
Bull. Sci. Math. 132 (2008), no. 1, 78-86.

\bibitem{PZ2} L. Peng and J. Zhao, {\em Weyl transforms on the upper half plane,}
Rev. Mat. Complut. 23 (2010), no. 1, 77-95.

\bibitem{ST} R. P. Sarkar and S. Thangavelu, {\em On theorems of Beurling and Hardy for the Euclidean motion group,}
Tohoku Math. J. (2) 57 (2005), no. 3, 335-351.

\bibitem{S} S. Sen, {\em Segal-Bargmann transform and Paley-Wiener theorems on Heisenberg motion groups,}
Adv. Pure Appl. Math. 7 (2016), no. 1, 13-28.

\bibitem{Si} B. Simon, {\em The Weyl transforms and $L^p$ functions on phase space,}
Proc. Amer. Math. Soc. 116 (1992), 1045-1047.

\bibitem{Su} M. Sugiura, {\em Unitary representations and harmonic analysis}, North-Holland Mathematical Library,
North-Holland Publishing Co., Amsterdam; Kodansha, Ltd., Tokyo, (44) 1990.

\bibitem{W} G. N. Watson, {\em A treatise on the theory of Bessel functions,} second edition, Cambridge
University Press, Cambridge, 1944.

\bibitem{We} H. Weyl, {\em The Theory of Groups and Quantum Mechanics,} Dover, 1950.

\bibitem{Wo}M. W. Wong, {\em Weyl Transform,} Springer-Verlag, New York, 1998.

\end{thebibliography}
\end{document}